\providecommand{\U}[1]{\protect\rule{.1in}{.1in}}
\definecolor{codegray}{rgb}{0.5,0.5,0.5}
\lstdefinestyle{mystyle}{
commentstyle=\color{codegreen},
keywordstyle=\color{magenta},
numberstyle=\tiny\color{codegray},
stringstyle=\color{codepurple},
basicstyle=\ttfamily\footnotesize,
breakatwhitespace=false,
breaklines=true,
captionpos=b,
keepspaces=true,
numbers=left,
numbersep=5pt,
showspaces=false,
showstringspaces=false,
showtabs=false,
tabsize=2
}  
\lstdefinelanguage{GAP}{
basicstyle=\ttfamily,
keywords={true, false, function, return, fail, if, in, while, do, od, else, elif, fi, break, continue},
keywordstyle=\color{blue}\bfseries,
otherkeywords={      >, <, ==
},
breaklines=true,
identifierstyle=\color{black},
sensitive=True,
comment=[l]{\#},
commentstyle=\color{cyan},
stringstyle=\color{red},
morestring=[b]',
morestring=[b]"
}  
\providecommand{\U}[1]{\protect\rule{.1in}{.1in}}
\newcolumntype{Y}{>{\raggedleft\arraybackslash}X}
\def\bz{{\mathbb{Z}}}
\def\vs{\vskip.3cm}
\def\noi{\noindent}
\def\gdeg{G\text{\rm -deg}}
\def\Om{\Omega}
\newtheorem{theorem}{Theorem}
\newtheorem{corollary}[theorem]{Corollary}
\newtheorem{definition}[theorem]{Definition}
\newtheorem{example}[theorem]{Example}
\newtheorem{lemma}[theorem]{Lemma}
\newtheorem{proposition}[theorem]{Proposition}
\newtheorem{remark}[theorem]{Remark}
\def\bz{\mathbb Z}
\def\vs{\vskip.3cm}
\def\vp{\varphi}
\def\bz{{\mathbb{Z}}}
\def\vs{\vskip.3cm}
\def\noi{\noindent}
\def\gdeg{G\text{\rm -deg}}
\def\Om{\Omega}
\def\cV{\mathcal V}
\def\cW{\mathcal W}
\def\cU{\mathcal U}
\def\id{\text{\rm Id}}
\begin{document}
\title{Nonlinear Vibrational Mode of Molecule with Octahedral Configuration}
\author[J. Liu]{Jingzhou Liu}
\address{Department of Mathematical Sciences the University of Texas at Dallas
Richardson, 75080 USA}
\email{Jingzhou.Liu@UTDallas.edu}

\maketitle
\begin{abstract}
In this work, we investigate the nonlinear dynamics of molecules with an octahedral
configuration, with particular focus on sulfur hexafluoride $SF_6.$ 
Under the assumption of isotypic nonresonance, we apply the method of equivariant gradient degree to prove the existence of
branches of periodic solutions emerging from the critical orbit of equilibrium, corresponding to at
least 16 distinct types of symmetries with maximal orbit kinds. 
Numerical animations are presented to illustrate the detected vibrational modes.
\end{abstract}
\noi \textbf{AMS subject classications:} 37G40, 47J15, 47H11, 37G15,  37J46, 70H33 

\medskip

\noi \textbf{Key Words:} symmetric bifurcation, nonlinear dynamics, equivariant
gradient degree, octahedral molecule.

\section{Introduction}
Understanding the dynamic behavior of molecular systems provides detailed insights into their intrinsic properties. To this end, various mathematical models have been developed for different molecules, many of which are grounded in classical mechanics. \cite{Nanotube} introduced the Morse bond potential, harmonic cosine angle potential, and 2-fold torsion as the energy model for carbon nanotubes, and used them to study its hydrophobic behavior; \cite{tetraphosphorus} explored the stability and nonlinear modes of  tetraphosphorus;  \cite{jing2009molecular, weeks1989rotation, wu1987vibrational} proposed force fields of fullerene that only consider small fluctuations. Many of these have been calibrated to replicate the vibrational modes observed in experimental techniques such as infrared and Raman spectroscopy, which are adept at detecting linear vibrational modes. However, to fully capture the complexity of molecular dynamics, alternative mathematical tools are needed, especially for analyzing nonlinear vibrational phenomena that lie beyond the reach of current experimental methods. \vs

In this work, we explore molecules with octahedral configuration, such as $SF_6\;$, where a central atom is surrounded by six ligand atoms (see Figure \ref{fig:oct}). To reduce complexity, we assume that the vibrations of the central atom are negligible and hereinafter, denoted by $\mathcal O$ its spatial position. To be more precise, we consider the following Newtonian system
\begin{equation}
\ddot{v}(t)=-\nabla U(v(t)), \label{eqn01}%
\end{equation}
where $v(t)=(v_{1}(t),\,v_{2}(t),\,v_{3}(t),\,v_{4}(t),v_5(t),v_6(t))^T.$
Each $v_{j}(t)\in
\mathbb{R}^{3}$ stands for spatial position of individual ligand atom at time $t$ and $v_j(t)$ interacts with both central atom $\mathcal O$ and all other $v_{k}(t)$ for $k\not =j$. 
Moreover, the potential $U:\mathbb R^3 \to \mathbb R$ in \eqref{eqn01} is given by  
\[
U(v):=\sum_{1\leq j<k\leq6}U_1(|v_{j}-v_{k}|^2)+\sum_{j=1}^{6}U_2(|v_{j}-\mathcal O|^2).
\]
where 
\begin{equation}\label{eq:U1}
U_1(r)= \frac{\sigma_1}{r^{6}}-\frac{\sigma_2}{r^{3}}  +\frac{\sigma_3}{\sqrt{r}},\;\;U_2(r)=(\sqrt{r}-1)^2,\;\; r> 0
\end{equation}
Here $U_1$ describes the combination of Lennard Jones and Columnb Potential among the interaction of $F$-$F$ atoms, and $U_2$ is the bond stretching between $F$-$\mathcal O$. One can refer to  \cite{strauss1994high},\cite{olivet2007},\cite{dellis} for more details about force field of $SF_6.$ 
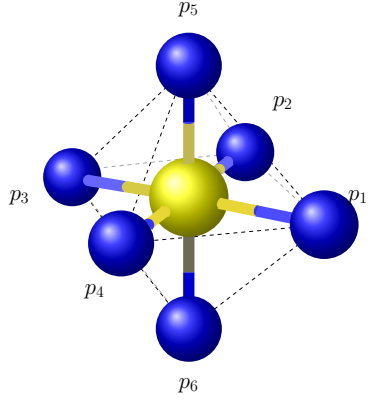
\begin{figure}[h]
\begin{center}
\scalebox{.5}
{\begin{tikzpicture}
\draw[dashed, color=black] (-3.1,0.533)--(-1.8,-1.23);
\draw[dashed, color=black] (0,3.7)--(-1.8,-1.23);
\draw[dashed, color=black] (3.6,-0.73)--(-1.8,-1.23);
\draw[dashed, color=black] (3.6,-0.73)--(0,-3.5);
\draw[dashed, color=gray!70] (3.6,-0.73)--(1.5,1.2);
\draw[dashed, color=black] (3.6,-0.73)--(0,3.5);
\draw[dashed, color=black] (-3.1,0.533)--(0,3.5);
\draw[dashed, color=black] (-1.8,-1.23)--(0,-3.5);
\draw[dashed, color=gray!70] (-3.1,0.533)--(0,-3.5);
\draw[dashed, color=gray!70] (-3.1,0.7)--(1.5,1.2);
\draw[dashed, color=gray!70] (0,3.5)--(1.5,1.2);
\draw[yellow!80!black, line width=3mm](-1.7,0.2833)--(0,0);
\shade[ball color=blue, opacity=1](1.5,1.2)circle (22pt);
\draw[blue!70!black, line width=3mm](0,4)--(0,2);
\shade[ball color=blue, opacity=1](0,3.5) circle (25pt);
\draw[yellow!30!black, line width=3mm](0,-0.9)--(0,-2);
\draw[yellow!70!black, line width=3mm](0.65,0.6291)--(0.8,0.7466);
\shade[ball color=yellow, opacity=1](0,0)circle (30pt); 

\draw[yellow!70!black, line width=3mm](0,2)--(0,0.9);
\draw[fill=blue!80!white,
draw=none] 
    (-0.16,2) arc[start angle=-180, end angle=0, x radius=0.15, y radius=0.05];
\draw[blue!80!black, line width=3mm](0,-2)--(0,-2.8);
\shade[ball color=blue, opacity=1](0,-3.5) circle (25pt); 
\draw[fill=yellow!70!black, draw=none] 
    (-0.22,0.92) arc[start angle=-180, end angle=0, x radius=0.15, y radius=0.04];
    
\draw[yellow!87!black, line width=3mm](-1.2,-0.82)--(-0.6,-0.35);
    \begin{scope}[shift={(-0.7,-0.35)}, rotate=-140]
  \draw[fill=yellow!87!black, draw=none]
    (0,-0.1) arc[start angle=270, end angle=90, radius=0.167];
\end{scope}
\begin{scope}[shift={(-1.73,-0.7466)}, rotate=-140]
  \draw[fill=blue!80!white, draw=none]
    (0,-0.1) arc[start angle=270, end angle=90, radius=0.15];
\end{scope}
\begin{scope}[shift={(-1.13,-0.6866)}, rotate=-140]
  \draw[fill=blue!80!white, draw=none]
    (0,-0.1) arc[start angle=270, end angle=90, radius=0.167];
\end{scope}
\draw[blue!60!white, line width=3mm](-1.2, -0.82)--(-1.5, -1.05);
    \begin{scope}[shift={(-1.25,-0.8)}, rotate=-140]
  \draw[fill=blue!60!white, draw=none]
    (0,-0.1) arc[start angle=270, end angle=90, radius=0.15];
\end{scope}
\draw[blue!60!white, line width=3mm](0.8,0.7466)--(1.1,0.9816);
    \begin{scope}[shift={(1,0.9811)}, rotate=-140]
  \draw[fill=blue!60!white, draw=none]
    (0,-0.1) arc[start angle=270, end angle=90, radius=0.15];
    \end{scope}
\begin{scope}[shift={(0.78,0.7766)}, rotate=-140]
  \draw[fill=yellow!80!black, draw=none]
    (0,-0.1) arc[start angle=270, end angle=90, radius=0.15];
\end{scope}
\shade[ball color=blue, opacity=1] (-3.1,0.533) circle (22pt);
\draw[blue!60!white, line width=3mm](-1.7,0.2833)--(-2.7,0.45);
\begin{scope}[shift={(-2.8,0.336)}, rotate=70]
  \draw[fill=blue!60!white, draw=none]
    (0,-0.08) arc[start angle=180, end angle=0, x radius=0.15, y radius=0.14];
\end{scope}
\shade[ball color=blue, opacity=1](-1.8,-1.23) circle (25pt);
\draw[yellow!87!black, line width=3mm](1.8,-0.365)--(0.8,-0.1622);
\draw[blue!70!white, line width=3mm](3.6,-0.73)--(1.8,-0.365);
\begin{scope}[shift={(0.71,-0.25)}, rotate=68]
  \draw[fill=yellow!87!black, draw=none]
    (0,-0.08) arc[start angle=180, end angle=0, x radius=0.1375, y radius=0.08];
\end{scope}
\begin{scope}[shift={(1.7,-0.465)}, rotate=75]
  \draw[fill=blue!80!white, draw=none]
    (0,-0.08) arc[start angle=180, end angle=0, x radius=0.14, y radius=0.08];
\end{scope}
    \begin{scope}[shift={(-1.78,0.18)}, rotate=62]
     \draw[fill=yellow!80!black, draw=none]
    (0,-0.08) arc[start angle=180, end angle=0, x radius=0.15, y radius=0.14];
    \end{scope}
\shade[ball color=blue, opacity=1](3.6,-0.73) circle (26pt);

\draw[fill=blue, draw=none] 
    (-0.235,-2.77) arc[start angle=-10, end angle=0, x radius=0.22, y radius=0.08];
\node at (-2.5,-2.5){\huge $p_4$};
\node at (0,-5){\huge $p_6$};
\node at (4.5,0){\huge $p_1$};
\node at (0,5){\huge $p_5$};
\node at (-4.5,0){\huge $p_3$};
\node at (2.5,2.5){\huge $p_2$};
\end{tikzpicture}
}
\caption{Symmetry of Octahedral Molecule}
\label{fig:oct}
\end{center}
\end{figure}

By construction, the force field ensures that the potential $U$ in equation \eqref{eqn01} is invariant under the action of the group
\[
G=O(2)\times S_{4}\times \mathbb Z_2\times O(3),
\]
where $O(2)$ stands for temporal phase shift and reflection, $S_{4}\times \mathbb Z_2$ represents permutations of 6 ligand atoms, $O(3)$ the rotation and reflection of a single atom in space $\mathbb R^3$. Notice that this property enables us to employ equivariant methods for the study of molecule dynamics. 
Next, we restrict system \eqref{eqn01} to the collision-free set
\[
\Omega_o:=\{(v_1,v_2,v_3,v_4,v_5,v_6)^T\in \mathbb R^{18}:v_{k}
\not =v_{j}\text{ for }k\not =j\},
\]
thereby excluding the collision orbits.
The local energy minimizer $v^0$ is identified as the stationary point satisfying $\nabla U (v^0) = 0$ and takes the form
\[
v^o=r^op^o,\quad p^o=(p_1,p_2,\cdots,p_6)^T,
\] 
where $\{p_1,p_2,\cdots p_6\}$ are shown in \eqref{eqn:p}. \vs
In this paper, we propose the methodology of equivariant gradient degree---first developed by K. Geba in \cite{gkeba1997degree}---to detect possible periodic solutions to \eqref{eqn01} near the orbit of this stationary point. The equivariant gradient degree extends the classical Brouwer and Leray–Schauder degree theories to the setting of gradient maps with symmetry. Below, we briefly outline the main idea.\vs
\noi Given a compact Lie group $G$, a $G$-invariant map $\varphi_{\lambda}$ and a neighborhood $\mathscr U$ of $G$-orbit of equilibrium $v^0$, the $G$-equivariant gradient degree 
$\nabla_{G}\text{\textrm{-deg}}\Big(\nabla \varphi
,\mathscr U\Big)$ is a well-defined element of the Euler ring $U(G)=\mathbb Z[\Phi(G)].$ Here, 
$\Phi(G)$ denotes the set of conjugacy classes $(H)$ of closed subgroups $H\leq G$. Thus, $\mathbb Z[\Phi(G)]$ is the free $\mathbb Z$-module generated by these classes. Then the degree can be written as
\[
\nabla_{G}\text{\textrm{-deg}}\Big(\nabla \varphi
,\mathscr U\Big)=n_1(H_1)+n_2(H_2)+\cdots n_k(H_k),\;\; n_j\in \mathbb Z
\]
where $(H_j)$ represents an orbit type  in $\mathscr U.$ The corresponding equivariant topological invariant at a critical value $\lambda_o$ is defined by
\[
\omega_G(\lambda_o):=\nabla_{G}\text{\textrm{-deg}}\Big(\nabla \varphi_{\lambda_-}
,\mathscr U\Big)-\nabla_{G}\text{\textrm{-deg}}\Big(\nabla \varphi_{\lambda_+}
,\mathscr U\Big),
\]
and takes the form
\[
\omega_G(\lambda_o)=r_1(K_1)+r_2(K_2)+\cdots +r_m(K_m),\;\;\; r_j\in \mathbb Z
\]
This invariant provides full classifications of the periodic solutions bifurcating from the equilibrium when $\lambda$ crosses $\lambda_o.$ For each nonzero coefficient $r_j,$ a global family of periodic vibrational modes emerges, with symmetry of at least $K_j$. It is worthy to note that our method provides alternative to other tools such as equivariant singularity, bush theory with density functional theory (\cite{golubitsky,chechin2003existence,chechin2015nonlinear}) for studying molecule dynamics, and it is among  many of other degrees such as primary degree, twisted degree, etc., which are all closely related to one another. See\cite{balanov2025degree, balanov2006applied, hooton2017noninvasive, liu2023existence, eze2022subharmonic, balanov2010periodic, ize2003equivariant,ruan2008applications, rybicki1997applications, dabkowski2017twisted,garcia2011global} for details of those degrees and some of the applications. One is also refered to Appendix \eqref{app:D} for some essential properties of equivariant gradient degree.\vs

Our main result, obtained through the application of the equivariant gradient degree, is stated in Theorem \eqref{thm:main}. Due to the octahedral configuration, the system exhibits eigenvalue multiplicities associated with resonance phenomena. Specifically, among the 15 nonzero eigenvalues, $\alpha^2_0$ has multiplicity 1, $\alpha^2_4$ has multiplicity 2, and each of $\alpha^2 _7,\alpha^2_{7^*},\alpha^2_8,\alpha^2_9$ has multiplicity 3 (see Table
\ref{tbl:eigen}). Classical result, such as the Lyapunov center theorem (cf. \cite{mawhin2013critical}), ensures the existence of periodic solutions near simple eigenvalues like  $\alpha_0$. Weinstein-Moser (cf. \cite{weinstein1973normal}) guarantees the existence of at least 15 periodic orbits on each sufficiently small fixed energy level. Our method shows that, under the assumption of isotypic nonresonance, i.e. no eigenvalue appears in more than one isotypic component, the newtonian system \eqref{eqn01} undergoes periodic solutions corresponding to
at least 16 distinct symmetry types of maximal orbit kinds, at critical numbers $\lambda_{j,l}$ with $j=0,4,7,7^*,8,9; \; l=1$. \textit{Among these symmetries, three correspond to spatially symmetric standing waves, eight exhibit anti-phase oscillations among individual atoms, and the remaining five are rotating waves.} \vs
Article \cite{chechin2003existence} provides a thorough theoretical analysis of the normal modes of general octahedral molecules based on bush theory. Building on this, article \cite{chechin2015nonlinear} focuses specifically on the $SF_6$ molecule, verifying the conclusions of \cite{chechin2003existence} through density functional theory (to capture polarization effects). Both studies adopt the Lennard-Jones potential as the force field. In our work, we further extend the model by introducing additional interactions (see \eqref{eq:U1}), and by accounting for both spatial and temporal (reversion and phase shift) symmetries.  The vibrational modes identified in both studies are reproduced in our results, along with additional modes detected.\vs

The paper is organized as follows: In Section \ref{sec:2}, the mathematical model for octahedral molecule is introduced. Subsection \ref{sec：2.1} discusses the force field and its properties. Subsection \ref{sec:2.2} analyzes the symmetry group $S_4\times \mathbb Z_2\times O(3).$ Subsection \ref{sec:2.3} computes equilibrium $v^o.$ Isotypic decomposition and spectrum of Hessian are discussed in Subsection \ref{sec:2.4}, \ref{sec:2.5}. In Section \ref{sec:3}, the framework for equivariant bifurcation is established. \ref{sec:3.1}-\ref{sec:3.3} cover the functional space reformulation and the computation of critical numbers. The gradient degree is computed in Subsection \ref{sec:3.4}, and the main result, Theorem\eqref{thm:main}, along with its proof, is presented in Subsection \ref{sec:3.5}.  In Section \ref{sec:4}, we describe the details of the symmetries derived in Section 3, and also provide the animations of all these symmetries. The details of $S_4\times \mathbb Z_2$ subgroups, the properties of Euler ring and  equivariant gradient degree are shown in Appendix \ref{app:A},\ref{app:C},\ref{app:D}, respectively. We also track the computation of Hessian and its eigenspace in Appendix \ref{app:pre}.  \vs 
\noi{\bf Acknowledgment:} I am deeply grateful to Dr. Carlos Garc\'ia-Azpeitia for suggesting this topic and for his guidance and expertise in this area. I also truly appreciate Dr. Wiesław Krawcewicz for his support throughout the project and valuable insights. Many thanks as well to Dr. Dmitry Rachinskiy for his encouragement and support along the way.

\section{Mathematical framework}\label{sec:2}
\subsection{Force field.} \label{sec：2.1}
Let $V=\mathbb R^{18}$ and consider 6 identical ligand atoms $v:=(v_1,v_2,v_3,v_4,v_5,v_6)^T\in V$ 
and
\[
\Omega_o:=\{v\in V:v_{k}
\not =v_{j}\text{ for }k\not =j\}.
\]
The overall interactions of ligand-ligand, ligand-central atoms are represented by Newtonian equation:
\[
\ddot{v}=-\nabla U(v),\;\;v\in \Omega_o
\]
where potential $U:\Omega_o\to \mathbb R$ is given by
\[
U(v):=\sum_{1\leq j<k\leq6}U_1(|v_{j}-v_{k}|^2)+\sum_{1\leq j<k\leq6}U_2(|v_{j}-\mathcal O|^2),
\]
and $U_1, U_2$ take the form in equation \eqref{eq:U1}.
Notice that since both $U_1$, $U_2$ belong to $C^2(\mathbb R^+) $ and
satisfy 
\begin{equation}\label{eqn:coercive}
\lim_{s^{+}\rightarrow0}U_1(s)+U_2(s)=\infty,\quad\lim_{s\rightarrow\infty}%
U_1(s)+U_2(s)=\infty.
\end{equation}
The combined potential $U$ diverges both as particles approach each other and as they move infinitely
far apart. This physically prevents particle collapse or dispersion, and ensures that $U$ is well-defined. Moreover, $U$ is also of class $C^2$ on the domain where particle distances remain
positive.
\subsection{Octahedral Symmetries.} \label{sec:2.2}
The space $V$ is a representation of the group 
\[
G':=S_4\times \mathbb Z_2\times O(3),
\]
which describes the full spatial symmetry of the octahedral molecule. 
Note that $S_4\times \mathbb Z_2$ can be identified with a subgroup $\mathbf O\leq S_6$ through the following correspondence between its elements and those of $S_6$:
\begin{align}\label{eq:s6s4p}
((24),1)&\leftrightarrow (14)(23)(56),\;\;
((12)(34),1)\leftrightarrow (13)(56),\;\;\;((1),-1)\leftrightarrow (13)(24)(56),\\
((132),1)&\leftrightarrow (146)(253),\;\quad
((1234),1)\leftrightarrow (1234).\;\nonumber
\end{align}
The action of $G'$ on $\mathbb R^{18}$ takes the form
\begin{equation}\label{eq:action}
(\sigma, M)(v_1,v_2,v_3,v_4,v_5,v_6)^T=(
Mv_{\sigma(1)}, Mv_{\sigma(2)}, Mv_{\sigma(3)}, Mv_{\sigma(4)}, Mv_{\sigma(5)}, M v_{\sigma(6)})^T,
\end{equation}
where $\sigma\in S_6, M\in O(3).$\vs
\noi We now regard $S_4\times \mathbb Z_2$ as a subgroup of $O(3).$ Notice that it naturally corresponds to the full symmetry of a regular octahedron $\textbf{P}\subset \mathbb R^3.$ Indeed, let the six vertices of $\textbf{P}=\{p_1,p_2,p_3,p_4,p_5,p_6\}\;$ be given by
\begin{equation}\label{eqn:p}
p_1=-p_3=(1,0,0)^T,\quad
p_2=-p_4=(0,1,0)^T,\quad
p_5=-p_6=(0,0,1)^T
\end{equation}
as illustrated in Figure \ref{fig:oct}. Then permutation $\sigma$ of 
$S_4\times \mathbb Z_2$ can be identified with matrix $M_{\sigma}\in O(3),$ where $M_{\sigma}$ denotes the octahedral group satisfying $M_{\sigma}\textbf{P}=\textbf{P},$ via the correspondence
\[
M_{\sigma}p_j=p_{\sigma(j)},\qquad j\in \{1,2,\cdots,6\}.
\]

\noi To be more precise,  the generators of $S_4\times \mathbb Z_2$, namely, $
((132),1), ((1234),1),((1),-1)$, 
admits the following identification with orthogonal matrices in $O(3)$
\begin{equation}\label{enq:identification}
M_{((132),1)}=\left[
\begin{array}
[c]{ccc}%
0 & 0 & -1\\
-1 & 0 & 0\\
0 & 1 & 0%
\end{array}
\right],\quad
M_{((1234),1)}
=\left[
\begin{array}
[c]{ccc}%
0 & -1 & 0\\
1 & 0 & 0\\
0 & 0 & 1
\end{array}
\right],\;\;
M_{((1),-1)}=\left[
\begin{array}
[c]{ccc}%
-1 & 0 & 0\\
0 & -1 & 0\\
0 & 0 & -1
\end{array}
\right].
\end{equation}
\subsection{Octahedral Equilibrium.}\label{sec:2.3} Notice that potential $U$ is $G'$ invariant. The following is based on the application of Palais’s Principle of Symmetric Criticality to locate critical point. For notational convenience, we hereinafter denote $S_4\times \mathbb Z_2$ by $S_4^p.$\vs
\noi Consider point $p^o=(p_1,p_2,p_3,p_4,p_5,p_6)^T \in \Omega_o.$ Its isotropy group $G'_{p^o}$ is given by
\[
\tilde{S}_4^{p}:=\left\{(\sigma,M_{\sigma})\in S_4^p\times O(3): \sigma \in S_4^p\right\},
\]
where $S_4^p$ is regarded as a subgroup of $S_6$ embedded in $O(3)$ through the identification described above. Moreover, a direct computation shows that the fixed-point subspace $V^{\tilde{S}_4^p}$ is one dimensional and given by
\[
V^{\tilde{S}_4^p}=\text{span}_{\mathbb R}\{(p_1,p_2,p_3,p_4,p_5,p_6)^T\}.
\]
So any vector $v\in V^{\tilde{S}_4^p}$ must take the form  $v=r p^o,\; r\in \mathbb R.$ Now, define
\[
\varphi (r):=U(rp^o)=12U_1(\sqrt{2}r)+3U_1(2r)+6U_2(r),\quad (r>0)
\]
Since $\varphi(r)$ is the restriction of $U$ to the fixed-point subspace $V^{\tilde{S}_4^p}\cap \Omega_o,$ and since $\varphi(r)$ satisfies \eqref{eqn:coercive}, there must exist $r^o$ such that $r^o$ is a minimizer. By Symmetric Criticality Principle, this minimizer implies that $r^op^o$ is a critical point of $U.$ i.e. the equilibrium $v^o$ of system \eqref{eqn01} takes the form
\begin{equation}\label{eq:critical}
  v^o=r^o p^o.  
\end{equation}
\subsection{Isotypic Decomposition.} \label{sec:2.4}
Given the $G'$ orbit of the equilibrium $v^o,$ the tangent space to the orbit at $v^o$, denoted $T_{v^o}G'(v^o)$, can be obtained from the Lie algebra $\frak{so}(3)$ of $SO(3)$ (notice, $S_4$ and $\mathbb Z_2$ are discrete). 
In particular, let $J_i, i=1,2,3$ denote the three infinitesimal generators of rotations in  $\mathbb R^3$ (i.e. $e^{\Phi J_1}, e^{\theta J_2}, e^{\Psi J_3}, $ where $\Phi,\theta,\Psi$ are Euler angles). Then these generators are explicitly given by%
\[
J_{1}:=\left[
\begin{array}
[c]{ccc}%
0 & 0 & 0\\
0 & 0 & -1\\
0 & 1 & 0
\end{array}
\right]  ,\quad J_{2}:=\left[
\begin{array}
[c]{ccc}%
0 & 0 & 1\\
0 & 0 & 0\\
-1 & 0 & 0
\end{array}
\right]  ,\quad J_{3}:=\left[
\begin{array}
[c]{ccc}%
0 & -1 & 0\\
1 & 0 & 0\\
0 & 0 & 0
\end{array}
\right],
\]
which form a basis of $\frak {so}(3).$ Therefore, the tangent space and slice at $v^o$ is given by
\[
T_{v^{o}}G'(v^{o})=\text{span}\{(J_{i}p_{1},J_{i}p_{2},J_{i}p_{3},J_{i}p_{4},J_{i}p_{5},J_{i}p_{6})^{T}\in V:i=1,2,3\},
\]
and
\[
S_o:=\{v\in V: v\cdot T_{v^o}G'(v^o)=0\}.
\]\vs
Since $v^o$ has isotropy $\tilde{S}_4^p$, so $S_o$ is an $\tilde{S}_4^p$ orthogonal representation. The purpose of the following is to obtain the $\tilde{S}_4^p$ isotypic decomposition of the slice $S_o$. We first study the character table of $S_4^p,$ as shown in Table \eqref{tbl:characters_s4}, where $\chi_j,j=0,\cdots,9$ denotes the characters of all $S_4^p$ irreducible representation $\mathcal{W}_j$ and $\mathcal{\chi}_{V}$ is the character of $V.$

\begin{table}[H] 
\centering
\resizebox{\textwidth}{!}{
\begin{tabular}{|c|c|c|c|c|c|c|c|c|c|c|c|}
\hline
Rep. & Character & ((1),1)& ((1),-1) & ((12),1) &((12),-1) & ((12)(34),1) &((12)(34),-1) & ((123),1) &((123),-1) & ((1234),1)&((1234),-1)\\\hline
$\mathcal{W}_{0}$ & $\chi_{0}$ & 1 &1 & 1 & 1&  1 & 1& 1 &1 & 1 &1\\
$\mathcal{W}_{1}$ & $\chi_{1}$ & 1 &-1 & 1 & -1&  1 & -1& 1 &-1 & 1 &-1\\
$\mathcal{W}_{2}$ & $\chi_{2}$ & 1 &1 & -1 &-1 & 1 &1 & 1 &1 & -1& -1\\
$\mathcal{W}_{3}$ & $\chi_{3}$ & 1 &-1 & -1 &1 & 1 &-1 & 1 &-1 & -1& 1\\
$\mathcal{W}_{4}$ & $\chi_{4}$ & 2 &2 & 0 &0 & 2 &2 & -1 & -1 & 0& 0\\
$\mathcal{W}_{5}$ & $\chi_{5}$ & 2 &-2 & 0 &0 & 2 &-2 & -1 & 1 & 0& 0\\
$\mathcal{W}_{6}$ & $\chi_{6}$ & 3 &3 & -1 & -1& -1 &-1 & 0 &0 & 1&1\\
$\mathcal{W}_{7}$ & $\chi_{7}$ & 3 &-3 & -1 & 1& -1 &1 & 0 &0 & 1&-1\\
$\mathcal{W}_{8}$ & $\chi_{8}$ & 3 & 3& 1 &1& -1 &-1& 0 &0 & -1&-1\\
$\mathcal{W}_{9}$ & $\chi_{9}$ & 3 & -3& 1 &-1& -1 &1& 0 &0 & -1&1\\\hline
$V=\mathbb{R}^{18}$ & $\chi_{V}$ & $18$ &0  &0  &2  &-2 & 4& 0 &0& 2& 0
\\\hline
\end{tabular}
}
\caption{Character Table of $S_4^p$}
\label{tbl:characters_s4}
\end{table}
\noi By comparing $\chi_{V}$ and characters in Table \eqref{tbl:characters_s4}, one has
\[
V=\mathcal{W}_{0}\oplus \mathcal W_4\oplus \mathcal W_6\oplus 2\mathcal W_7\oplus\mathcal{W}_{8}\oplus \mathcal W_9.
\]
Moreover, since
\[
\dim T_{v^{o}}G'(v^{o})=\dim G'(v^o)=\dim G'-\dim G'_{v^o}=3,
\]
and the eigenvalue $\alpha_6=0$ has multiplicity exactly 3 (see Table \ref{tbl:eigen}), we conclude explicitly that
\[
T_{v^o}G'(v^o)=\mathcal W_6.
\]
\noi Thus slice $S_o$ has the following isotypic decomposition
\[
S_o=V_0\oplus V_4\oplus V_7\oplus V_7^* \oplus  V_8\oplus V_9,\quad\quad  
\] 
where $V_j=\mathcal W_j, (j=0,4,7,7^*,8,9)$ and $V_7, V_7^*$ are equivalent.\vs
\subsection{Computation of Hessian $\nabla^{2}U(v^o)$ and Its Spectrum.}\label{sec:2.5}
Notice that the explicit form of gradient $\nabla U(v)$ is given by
\[
\nabla U(v)=2\left[
\begin{array}
[c]{c}%
\sum_{k\not =1}U_1^{\prime}(|v_{1}-v_{k}|^{2})(v_{1}-v_{k})+U_2^{\prime}(|v_1-\mathcal O|^2)(v_1-\mathcal O)\\
\sum_{k\not =2}U^{\prime}(|v_{2}-v_{k}|^{2})(v_{2}-v_{k})+U_2^{\prime}(|v_2-\mathcal O|^2)(v_2-\mathcal O)\\
\vdots\\
\sum_{k\not =6}U^{\prime}(|v_{6}-v_{k}|^{2})(v_{6}-u_{k})+U_2^{\prime}(|v_6-\mathcal O|^2)(v_6-\mathcal O).
\end{array}
\right]
\]
In such a case, we say that $v^o=r^op^o$ is a critical point if $r^o$ satisfies the condition 
\begin{equation}\label{eq:CT}
4U_1'(2{r^o}^2)+2U_1'(4{r^o}^2)=-U_2'({r^o}^2).
\end{equation}
Given $v=(x,y,z)^T\in \mathbb R^3,$ define \[\mathfrak m_v:=vv^T=
\begin{bmatrix}
    x^2&xy&xz\\xy&y^2&yz\\xz&yx&z^2
\end{bmatrix}.\] For $j\neq k,$ set $\mathfrak m_{jk}=\mathfrak m_{p_j-p_k},$ it is obvious that $\mathfrak m_{jk}=\mathfrak m_{kj}.$ For each $j\in \{1,\cdots,6\},$ define 
\[
\Lambda_j:=\{k\in \{1\cdots 6\}: k\neq j,\;\; p_k \sim p_j\},
\]
where $p_k\sim p_j$ implies that $p_k$ and $p_j$ are adjacent (see Figure \ref{fig:oct}).\vs
\noi For simplicity of notation, hereafter we define
\[
a:= U_1^{''}(2{r^o}^2),\; b := U_1^{''}(4{r^o}^2),\; c := U_2^{''}({r^o}^2), d:= U_1^{'}(4{r^o}^2),\; e := U_1^{'}(2{r^o}^2),
\]
and denote by $P_{jk}$ the entries of Hessian, which is explicitly given by 
\[
P_{jk}=\frac{\partial^2 U(v^o)}{\partial p_k\partial p_j}.
\] 
Then a direct computation shows that the Hessian at $v^o$ takes the form
\[
M:=\nabla^2 U(v^o)=
\begin{bmatrix}
 P_{11}&\cdots& P_{16}  \\
\vdots &&\vdots  \\ 
 P_{61} & \cdots& P_{66}
\end{bmatrix},
\]
where
\begin{equation}\label{eqn: Pjk}
P_{jk}=
\begin{cases}
    -2a\mathfrak m_{jk}-e\id,&\quad k\in \Lambda_j\\
    -2b\mathfrak m_{jk}-d\id,&\quad k\notin \Lambda_j\cup \{j\}\\
    2a\sum_{k\in \Lambda_j}\mathfrak m_{jk}+2b\sum_{k\notin \Lambda_j\cup\{j\}}\mathfrak m_{jk}+2c\mathfrak m_{j0}-d\id,&\quad k=j.\\
\end{cases}
\end{equation}
 Define 
\[
\rho:=\sqrt{36a^2+4ac+c^2+36ae+2ce+9e^2},
\]
the eigensystem of $\nabla^2 U(v^o)$ is presented in Table \eqref{tbl:eigen}. For completeness, explicit expressions for each entry $P_{jk}$ 
are provided in Appendix \ref{app:pre}. 

\begin{table}[h!]
\begin{tabular}{|l|c|c|}
\hline
Eigenvalue $\alpha^2_j$ & Eigenspace & Multiplicity \\ \hline
 $\alpha^2_0=2(8a+8b+c)$ & $\mathcal W_0$& 1 \\ \hline
$\alpha^2_4=2(2a+8b+c)$&$\mathcal W_4$ & 2 \\ \hline
$\alpha^2_6=0$&$\mathcal W_6$& 3 \\ \hline
$\alpha^2_7=6a+c-2d-e-\rho$&$\mathcal W_7$& 3 \\ \hline
$\alpha^2_{7^*}= 6a+c-2d-e+\rho$&$\mathcal W_7^*$ &3 \\ \hline
$\alpha^2_8= 8a$ & $\mathcal W_8$ &3 \\ \hline
$\alpha^2_9= 2(2a-d+e)$ &$\mathcal W_9$& 3 \\ \hline
\end{tabular}
\caption{Eigenvalue and Eigenspace of Hessian $\nabla^2U(v^o)$}
\label{tbl:eigen}
\end{table}

\section{Equivariant Bifurcation}\label{sec:3}
In what follows, our goal is to identify nontrivial $T$-periodic solutions to system \eqref{eqn01} that bifurcate from the orbit $G'(v^o)$ of equilibrium. We first normalize the period via the substitution 
\[
u(t)=v(\lambda t),
\]
where $\lambda=T/2\pi$ and $\lambda^{-1}$ denotes the frequency. Then system \eqref{eqn01} transforms into the following equivalent form
\begin{equation}\label{eqn02}
\begin{cases}
    \ddot u=-\lambda^2 \nabla U(u)\\
    u(0)=u(2\pi),\;\; \dot u(0)=\dot u(2\pi)
\end{cases}
\end{equation}
\subsection{Functional Space Reformulation.} \label{sec:3.1}Consider Sobolev space of $2\pi$-periodic functions 
\[
H^1_{2\pi}(\mathbb R;V):=\Big\{v:\mathbb R\to V:v(0)=v(2\pi), v|_{[0,2\pi]}\in H^1([0,2\pi];V)
\Big\},
\]
equipped with the inner product
\[
\langle u,v \rangle=\int_0^{2\pi}\Big(u(t)\cdot v(t)+\dot{u}(t)\cdot \dot{v}(t)\Big)dt.
\]
Let $O(2)=SO(2)\cup \kappa SO(2)$ denote the group of all $2\times 2$ orthogonal matrices,where 
\[
SO(2)=\Big\{\begin{bmatrix}
    \cos \tau& -\sin\tau\\
    \sin \tau & \cos \tau
\end{bmatrix}\Big\},\quad \kappa=
\begin{bmatrix}
    1&0\\0&-1
\end{bmatrix},
\]
Each element of $SO(2)$ corresponds naturally to a rotation $e^{i\tau}\in S^1$, and the element $\kappa$ represents reflection, satisfying the relation $\kappa e^{i\tau}=e^{-i\tau}\kappa$.\vs

It is clear that $H^1_{2\pi}(\mathbb R;V)$ is orthogonal Hilbert representation of $G$, where 
\[
G=O(2)\times G',\;\; G'=S_4\times \mathbb Z_2\times O(3).
\]
Indeed, we have, for each $v\in H^1_{2\pi}(\mathbb R,V), $ 
\begin{align}\label{eqn:action}
    (e^{i\tau},\sigma, M)v(t)&=(\sigma, M)v(t+\tau)\\
    (\kappa e^{i\tau},\sigma, M)v(t)&=(\sigma, M)v(-t+\tau).\notag
\end{align}
Moreover, one can derive the identification between the $2\pi$-periodic function $x:\mathbb{R}\rightarrow
V$ and $\widetilde{x}:S^{1}\rightarrow V$ through the following commuting diagram:
\[
\begin{tikzcd}[column sep=4em]
\mathbb{R} \arrow[dr, "x"'] \arrow[r, "\mathfrak{e}(\tau)=e^{i\tau}"] & S^1 \arrow[d, "\tilde{x}"] \\
 & V
\end{tikzcd}.
\]
Therefore, $H_{2\pi
}^{1}(\mathbb{R}, V) \simeq H^{1}(S^{1}, V)$. \vs Define
$
\Omega:=\{v\in H^1(S^1;V):v(t)\in \Omega_o\}.
$
We then introduce the energy functional $
J:\mathbb R\times \Omega\to\mathbb R$ given by
\begin{equation}
J(\lambda,v):=\int_{0}^{2\pi}\left[  \frac{1}{2}|\dot{v}(t)|^{2}-\lambda
^{2}U(v(t))\right]  dt. \label{eq:var-1}%
\end{equation}
Thus, system \eqref{eqn02} can now be expressed as the variational equation
\begin{equation}\label{eqn03}
\nabla_{v}J(\lambda,v)=0,\;\; (\lambda, v)\in \mathbb R\times \Omega.
\end{equation}\vs
By the Riesz representation theorem for Hilbert spaces, the space $H^1(S^1;V)$ can be identified with its dual $H^1(S^1;V)^{*}$ via the inverse Riesz map $L : H^1(S^1;V) \to H^1(S^1;V)^{*}$, given by
\[
Lv=-\ddot v+v.
\]
Consequently, the gradient operator $\nabla_{v}J(\lambda,v):H^1(S^1;V)\to H^1(S^1;V)$ can be uniquely defined by
\[
 \nabla_vJ(\lambda, v):=L^{-1} \delta_vJ(\lambda,v)=L^{-1}(-\ddot u-\lambda^2 U(v)),
\]
where  $\delta_v J(\lambda, v) :H^1(S^1;V)\to H^1(S^1;V)^*.$ Therefore, solving \eqref{eqn03} is equivalent to solving system $\nabla_v J(\lambda,v)=0.$  One can  verify that for $v\in H^1(S^1;V),$
\[
 \nabla_v J(\lambda, v)=v-L^{-1}(\lambda^2 \nabla U(v)+v),
\]
where $L^{-1}(\lambda^2 \nabla U(v)+v)$ is a compact operator. Moreover, the linearization $\nabla_v^2 J(\lambda,v)$ at $v^o$ is given by
\begin{equation}\label{eq:nabla2}
\nabla_v^2J(\lambda, v^o)=\id-j\circ L^{-1}(\lambda^{2}\nabla^2 U(v^o)+\id),
\end{equation}

\subsection{Application of Slice Criticality Principle.} \label{sec:3.2} Consider the equilibrium point $v^o$ discussed previously, which is clearly also a critical point of $J$. Recall that our primary goal is to identify non-stationary $2\pi$-periodic solutions bifurcating from the equilibrium $v^o,$ that is,  non-constant solutions of system \eqref{eqn02}. The analysis following is founded on the Slice Criticality Principle (see Theorem \eqref{thm:SCP} for details), a tool for the computation of equivariant gradient degree of $\nabla J_{\lambda}$. \vs
We consider the orbit $G(v^o)$ within the space $H^1(S^1;V),$ and let  $\bf{S}_o$ denote the slice to $G(v^o)$ at $v^o$ in this space.
We then define the restriction
\[
\mathbf J(\lambda, v):=J(\lambda, v)|_{\mathbb{R}\times (\mathbf S_o\cap \Omega)}.
\]
Clearly, $\mathbf  J$ remains invariant under the isotropy group $G_{v^o}$. Now one can utilize this restriction to apply Slice Criticality Principle for computation of equivariant gradient degree of $\nabla J_{\lambda}$ on a small neiborhood $\mathscr U$ of $G(v^o)$.\vs
Consider operator 
\begin{equation}\label{eqn:linearization}
\mathscr A(\lambda):=\nabla^2_v J(\lambda, v^o)|_{\mathbf S_o}:\bf S_o\to \bf S_o.
\end{equation}

Since $
\mathscr A(\lambda)=
\nabla_{v}^{2}\mathbf J(\lambda,v^{o}),$ one can verify that $G(v^{o})$ is a finite dimensional isolated orbit of
critical points of $J$ whenever $\mathscr A(\lambda)$ is an isomorphism.
\vs
\noi That is to say, if a point $(\lambda^{o},v^{o})$ is a bifurcation point for
\eqref{eqn03}, then $\mathscr A(\lambda^{o})$ cannot be an isomorphism. In
such a case we introduce the \textit{critical set} for trivial solution $v^o$
\[
\Lambda:=\{\lambda>0:\mathscr A(\lambda)\text{ is not an isomorphism}\}.
\]
\subsection{Slice $\mathbf S_o$ and Critical Numbers.}\label{sec:3.3} Let $S^1$ acts on $H^1(S^1;V)$ by shifting argument (see \eqref{eqn:action}). It follows that $H^1(S^1;V)^{S^1}$ consists of constant functions and can be identified with $V.$ Thus, we obtain the decomposition
\[
H^{1}(S^{1}, V)= V\oplus W,\quad \text{ where }W:=V^{\perp}.
\]
Consequently, the slice $\mathbf S_o$ to $G(v^o)$ in $H^1(S^1;V)$ at $v^o$
has the explicit form
\[
\mathbf S_o=S_o\oplus W.
\]


Consider the $S^{1}$-isotypic decomposition of $W$, given explicitly by
\[ W=\overline{\bigoplus_{l=1}^{\infty} W_{l}},\quad
W_{l}:=\{\cos(l\cdot)\mathfrak{a}+\sin(l\cdot)\mathfrak{b}:\mathfrak{a}%
,\,\mathfrak{b}\in V’\}
\]
In a standard way, each space $W_{l}$, $l=1,2,\dots$, can be naturally identified with the space $ {V}^{\mathbb{C}},$  on which the $S^{1}$-action is given by Fourier mode $l$. Specifically, we have
\[
W_{l}=\{e^{il\cdot}z:z\in {V’}^{\mathbb{C}}\}.
\]
Since the operator $\mathscr A(\lambda)$ is equivariant with respect to
\[
G_{v^{o}}=O(2)\times \tilde{S}^p_{4},
\]
it is thus $S^{1}$-equivariant as well. Hence we have $\mathscr A(\lambda)(
W_{l})\subset W_{l}$. By $\tilde{S}^p_{4}$-isotypic decomposition
of ${V}^{\mathbb{C}}$, we obtain a corresponding $G_{v^{o}}$-isotypic decomposition given by
\[
W_{l}=W_{0,l}\oplus  W_{4,l}\oplus W_{7,l}\oplus W_{7^*,l}\oplus W_{8,l}\oplus W_{9.l},\qquad
W_{j,l}=\mathcal{W}_{j,l}~.
\]
Moreover, we have
\[
\mathscr A(\lambda)|_{W_{j,l}}=\left(  1-\frac{\lambda^{2}\alpha^2_{j}+1}{l^{2}%
+1}\right)  \text{\textrm{Id\,}}~.
\]
Thus $\mathscr A(\lambda^{o})|_{W_{j,l}}=0$ if and only if $\lambda^{o}%
=\frac{l}{\alpha_{j}}$ for $l\in \mathbb N^+$ and $j=0,4,7,7^*,8,9$. We denote the critical numbers $\lambda\in\Lambda$ as
\[
\lambda_{j,l}=\frac{l}{\alpha_{j}}~,
\]
and the critical set $\Lambda$ associated with the equilibrium $v^{o}$ of the system
\eqref{eqn01} is described as
\[
\Lambda:=\left\{  \frac{l}{\alpha_{j}}:j=0,4,7,7^*,8,9,\quad l\in \mathbb N^+
\right\} .
\]
\noi For the case where equation \eqref{eq:U1} has the specific parameters $\sigma_1=\sigma_2=0.0618, \sigma_3=1,$ one can derive the equilibrium  $r^o=1.4128,$ along with the relation among different $\lambda_{j,l}:$
\begin{align}\label{sq:ordering}
\lambda_{0,1}&<\lambda_{7^*,1}<\lambda_{4,1}<\lambda_{7,1}<\lambda_{0,2}<\lambda_{8,1}
<\lambda_{7^*,2}<\lambda_{4,2}<\cdots
\end{align}
For details of the first 30 pairs of $(\lambda_{j,l},\alpha_j),$ see Figure \eqref{fig:criticalnum}.
\begin{figure}[H]
\includegraphics[width=1.1\textwidth, keepaspectratio]{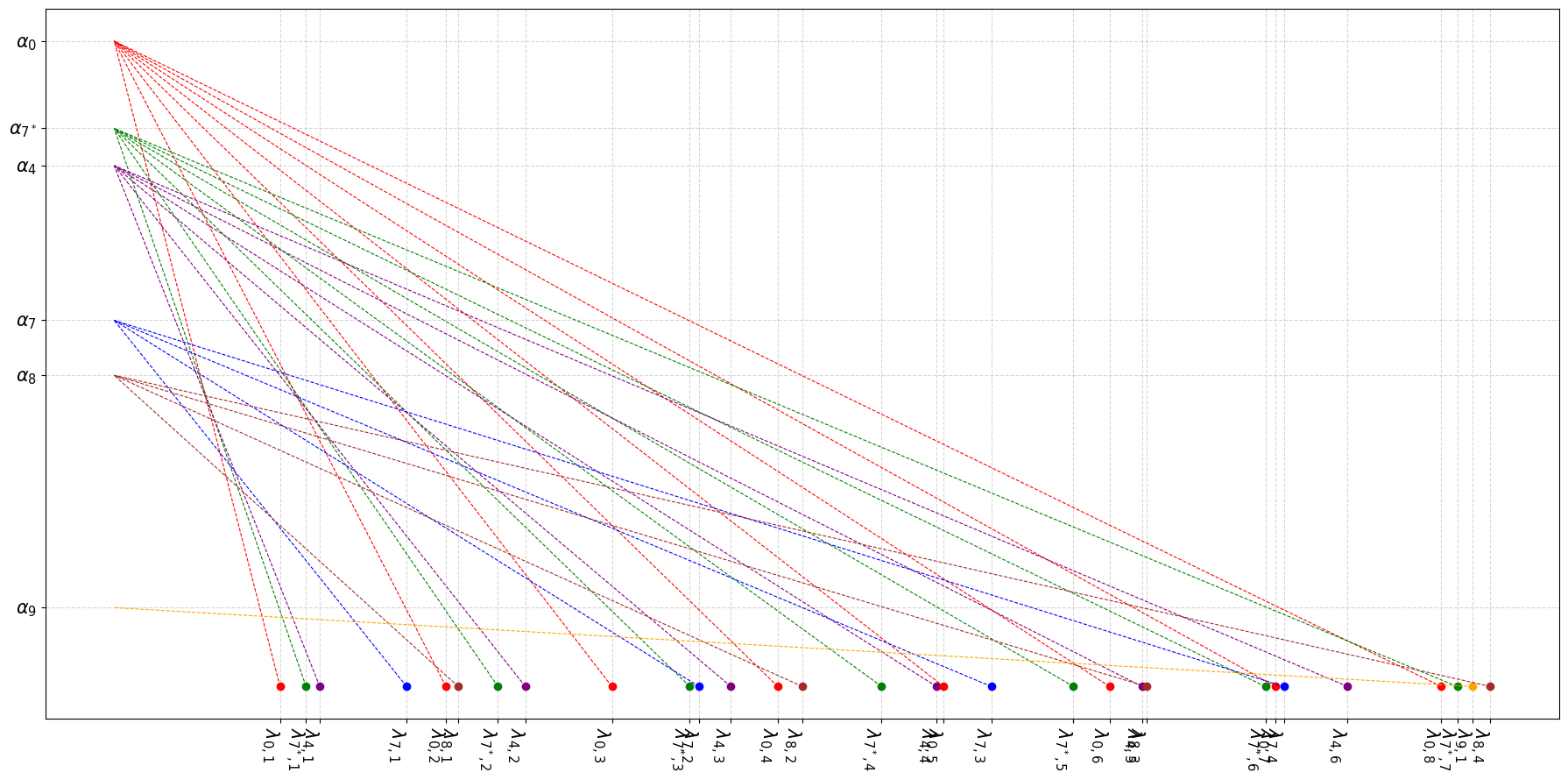}
  \caption{Critical Values $\lambda_{j,l}$}
  \label{fig:criticalnum}
\end{figure}

Notice, the nondimensionalized version of parameters $\sigma_1,\sigma_2,\sigma_3$ are based on actual numbers derived from numerical experiments in \cite{dellis}. Moreover, we need to point out that, in a general case of isotypic resonance, the critical numbers in $\Lambda$ cannot be uniquely identified with the indices $(j,l).$ However, one can observe that in our setting, $\alpha_j$ are pairwise distinct and given by \[
0.7867,0.5123,0.2532,0.5882,0.1829,0.01173.
\] Therefore, it is reasonable, hereinafter, to make such an assumption that all critical number $\lambda_{j,1}\;(j=0,4,7,7^*,8,9)$ are isotypic nonresonant. 
\subsection{Computation of the Gradient Degree.}\label{sec:3.4} 
Note that the bifurcation invariant $\omega_G(\lambda_{j,l})$ is expressed as an element in the Euler ring  $U(O(2)\times S_4^p).$ This framework allows for a complete classification of symmetry types with nonzero dimensional Weyl group. However, in our setting, since all maximal orbit types lie in $\Phi_o(O(2)\times S_4^p),$ so the computation can be simplified by restricting to the Burnside ring $A(O(2)\times S_4^p)$. More precisely, by applying the ring homomorphism \eqref{eq:pi_0-homomorphism} $\pi_0: U(G)\to A(G),$ we obtain the truncated bifurcation invariant as
\[
\tilde{\omega}_G(\lambda_{j,l})=\pi_0\Big(\omega_G(\lambda_{j,l})\Big).
\]
\vs
The following are the basic degrees truncated to $A(O(2)\times S_4^p)$ computed by G.A.P (see \cite{balanov2025degree}).
\begin{align}\label{eqn:basicdegree}
\nabla\text{-deg}_{\mathcal W_{0,l}}=&-\textcolor{red}{(D_l \times S^p_4)}+(O(2)\times  S_4^p)+\alpha_{0,l},\\
\nabla\text{-deg}_{\mathcal W_{4,l}}=&4({D_l}^{\mathbb Z_l}\times^{V_4^p}D_4^p)+({D_l}\times V_4^p)-\textcolor{red}{({D_{2l}}^{D_l}\times ^{V_4^p}D_4^p)}-\textcolor{red}{({D_l}\times D_4^p)}-2\textcolor{red}{({D_{3l}}^{\mathbb Z_l}\times^{V_4^p}_{D_3}S_4^p)}+(O(2)\times S_4^p)+\alpha_{4,l},\nonumber\\
\nabla\text{-deg}_{\mathcal W_{7,l}}=& -2({D_{2l}}^{\mathbb Z_l}\times _{\mathbb Z_2^p}\mathbb Z_2^p)-({D_{2l}}^{D_l}\times_{\mathbb Z_1^p} \mathbb Z_1^p)+2({D_{2l}}^{\mathbb Z_l}\times ^{D_1^z}_{D_2}D_2^p)+2({D_{2l}}^{\mathbb Z_l}\times ^{\mathbb Z_2^-}_{D_2}D_2^p)+2({D_{2l}}^{\mathbb Z_l}\times ^{\mathbb Z_2^-}_{D_2}V_4^p)\nonumber\\
&+2({D_{2l}}^{D_l}\times ^{D_1^z}D_1^p)+({D_{2l}}^{D_l}\times ^{\mathbb Z_2^-}\mathbb Z_2^p)-2\textcolor{red}{({D_{6l}}^{\mathbb Z_l}\times _{D_3^p}D_3^p)}-2\textcolor{red}{({D_{4l}}^{\mathbb Z_l}\times ^{\mathbb Z_2^-}D_4^p)}-\textcolor{red}{({D_{2l}}^{D_l}\times ^{D_2^d}D_2^p)}\nonumber\\
&-\textcolor{red}{({D_{2l}}^{D_l}\times ^{D_3^z}D_3^p)}-\textcolor{red}{({D_{2l}}^{D_l}\times ^{D_4^z}D_4^p)}+(O(2)\times S_4^p)+\alpha_{7,l},\nonumber
\end{align}
\begin{align}
\nabla\text{-deg}_{\mathcal W_{8,l}}=& -2({D_l}^{\mathbb Z_l}\times ^{\mathbb Z_1^p}\mathbb Z_2^p)-(D_l\times \mathbb Z_1^p)+2({D_l}^{\mathbb Z_l}\times ^{D_1^p}D_2^p)+2({D_l}^{\mathbb Z_l}\times ^{\mathbb Z_1^p}_{D_2}D_2^p)+2({D_{2l}}^{\mathbb Z_l}\times ^{\mathbb Z_1^p}_{V_4}V_4^p)\nonumber\\
&+2(D_l\times D_1^p)+({D_{2l}}^{D_l}\times ^{\mathbb Z_1^p}\mathbb Z_2^p)-2\textcolor{red}{({D_{3l}}^{\mathbb Z_l}\times_{D_3}D_3^p)}-2\textcolor{red}{({D_{4l}}^{\mathbb Z_l}\times_{D_4}D_4^p)}-\textcolor{red}{(D_{2l}^{D_l}\times ^{D_1^p}D_2^p)}\nonumber\\
&-\textcolor{red}{(D_l\times D_3^p)}
-\textcolor{red}{({D_{2l}}^{D_l}\times ^{D_2^p}D_4^p)}+(O(2)\times S_4^p)+\alpha_{8,l},\nonumber
\\
\nabla\text{-deg}_{\mathcal W_{9,l}}=& -2({D_{2l}}^{\mathbb Z_l}\times _{\mathbb Z_2^p}\mathbb Z_2^p)-(D_{2l}^{D_l}\times_{\mathbb Z_1^p} \mathbb Z_1^p)+2({D_{2l}}^{\mathbb Z_l}\times ^{D_1}_{\mathbb Z_2^p}D_2^p)+2({D_{2l}}^{\mathbb Z_l}\times ^{\mathbb Z_2^-}_{D_2}D_2^p)+2({D_{2l}}^{\mathbb Z_l}\times ^{\mathbb Z_2^-}_{D_2}V_4^p)\nonumber\\
&+2(D_{2l}^{D_l}\times ^{D_1}D_1^p)+({D_{2l}}^{D_l}\times ^{\mathbb Z_2^-}\mathbb Z_2^p)-2\textcolor{red}{({D_{6l}}^{\mathbb Z_l}\times _{D_3^p}D_3^p)}-2\textcolor{red}{({D_{4l}}^{\mathbb Z_l}\times^{\mathbb Z_2^-}D_4^p)}-\textcolor{red}{(D_{2l}^{D_l}\times ^{D_2^d}D_2^p)}\nonumber\\
&-\textcolor{red}{(D_{2l}^{D_l}\times ^{D_3} D_3^p)}
-\textcolor{red}{({D_{2l}}^{D_l}\times ^{D_4^d}D_4^p)}+(O(2)\times S_4^p)+\alpha_{9,l}\nonumber
\end{align}
where $\alpha_{j,l}$ does not belong to the Burnside Ring $A(O(2)\times S_4^p)$, i.e. $\text{coeff}^H(\alpha_{j,l})=0,\dim N(H)=0.$ Observe $\nabla\text{-deg}_{\mathcal W_{j,l}}$ is invertible in the Euler Ring $U(O(2)\times S_4^p).$\vs  

\subsection{Equivariant Bifurcation Result.}\label{sec:3.5}  Let's take Fourier mode $l=1$ for critical numbers $\lambda_{j,l}\in \Lambda,$  then we obtain the following result. 
\begin{theorem}\label{thm:main}
    Suppose critical numbers $\lambda_{j,1}\in \Lambda\; (j=0,4,7,7^*,8,9)$ are isotypic nonresonant, 
    the system \eqref{eqn01} admits branches of periodic solutions emerging from the equilibrium $v^o,$ with at least 16 different symmetry types of maximal orbit kinds  corresponding to irreducible representations $\mathcal W_j, j=0,4,7,7^*,8,9.$ More precisely,
    \begin{enumerate}
        \item[1)] j=0: $G$-orbit of branch of periodic solution with symmetry $(D_1\times S_4^p).$
        \item[2)] j=4: $G$-orbit of branches of periodic solutions with symmetries $(D_2^{D_1}\times ^{V_4^p}D_4^p), (D_1\times D_4^p), (D_3^{\mathbb Z_1}\times ^{V_4^p}_{D_3}S_4^p).$
        \item[3)] j=7 or $7^*$: $G$-orbit of branches of periodic solutions with symmetries $(D_6^{\mathbb Z_1}\times _{D_3^p}D_3^p), (D_4^{\mathbb Z_1}\times ^{\mathbb Z_2^-}D_4^p), (D_2 ^{D_1}\times^{D_2^d}D_2^p),(D_2 ^{D_1}\times^{D_3^z}D_3^p), (D_2 ^{D_1}\times^{D_4^z}D_4^p)$.
        \item[4) ] j=8: $G$-orbit of branches of periodic solutions with symmetries $(D_3^{\mathbb Z_1}\times _{D_3}D_3^p), (D_4^{\mathbb Z_1}\times _{D_4}D_4^p), (D_2 ^{D_1}\times^{D_1^p}D_2^p),(D_2 ^{D_1}\times^{D_2^p}D_4^p), (D_1\times D_3^p)$.
        \item[5) ] j=9: $G$-orbit of branches of periodic solutions with symmetries $(D_6^{\mathbb Z_1}\times _{D_3^p}D_3^p), (D_4^{\mathbb Z_1}\times ^{\mathbb Z_2^-}D_4^p), (D_2 ^{D_1}\times^{D_2^d}D_2^p),(D_2 ^{D_1}\times^{D_3}D_3^p), (D_2 ^{D_1}\times^{D_4^d}D_4^p)$.
    \end{enumerate}
\end{theorem}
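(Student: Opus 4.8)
The plan is to detect each branch listed in 1)--5) as a nonvanishing coefficient, indexed by a \emph{maximal} orbit type, of the bifurcation invariant $\omega_G(\lambda_{j,1})$, and then to invoke the global-bifurcation property of the equivariant gradient degree (Appendix \eqref{app:D}): whenever $\mathrm{coeff}^{K}(\omega_G(\lambda_{j,1}))\neq 0$ for a maximal orbit type $(K)$, a global connected branch of solutions of $\nabla_v J(\lambda,v)=0$ emanates from the critical orbit at $\lambda=\lambda_{j,1}$ along which the isotropy contains a conjugate of $K$, and its nonconstant members are exactly the nonequilibrium $2\pi$-periodic solutions of \eqref{eqn02} (Subsection \ref{sec:3.1}), so the rescaling $v(t)=u(t/\lambda)$ turns each into a periodic solution of \eqref{eqn01} with the asserted symmetry. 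Throughout I write $\Gamma:=O(2)\times S_4^p$; because $G_{v^o}\cong\Gamma$ and the problem localizes near $G(v^o)$, the Slice Criticality Principle \eqref{thm:SCP} lets all degree computations be carried out inside the Euler ring $U(\Gamma)$, whose unit I denote $(\Gamma)$.

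Fix $j\in\{0,4,7,7^*,8,9\}$. First I would choose an isolating neighborhood $\mathscr U$ of $G(v^o)$: by Subsection \ref{sec:3.2} the operator $\mathscr A(\lambda)$ of \eqref{eqn:linearization} is an isomorphism for every $\lambda$ in a punctured neighborhood of $\lambda_{j,1}$, so $G(v^o)$ is an isolated finite-dimensional orbit of critical points there and a single $\mathscr U$ serves for all those $\lambda$. Then, by the Slice Criticality Principle and multiplicativity of $\nabla_G\text{-deg}$ together with \eqref{eq:nabla2} and the spectral formula $\mathscr A(\lambda)|_{W_{j',l'}}=\bigl(1-\tfrac{\lambda^{2}\alpha_{j'}^{2}+1}{l'^{2}+1}\bigr)\mathrm{Id}$ of Subsection \ref{sec:3.3} (valid also for $l'=0$, where $W_{j',0}=V_{j'}\subset S_o$), the degree $\nabla_G\text{-deg}\bigl(\nabla_v J(\lambda,\cdot),\mathscr U\bigr)$ is the product, over the finitely many isotypic summands $W_{j',l'}$ of $\mathbf S_o$ on which that scalar is negative, of the basic degrees $\nabla\text{-deg}_{\mathcal W_{j',l'}}$. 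Since the $\alpha_j$ are pairwise distinct and the $\lambda_{j,1}$ are assumed isotypic nonresonant, as $\lambda$ increases through $\lambda_{j,1}$ this index set changes only by gaining the single summand $W_{j,1}$ — for $j=7,7^*$ the two isomorphic copies $W_{7,1}$ and $W_{7^*,1}$ are gained separately, at the distinct numbers $\lambda_{7,1}$ and $\lambda_{7^*,1}$, distinct precisely because $\alpha_7\neq\alpha_{7^*}$. Since each basic degree is a unit of $U(\Gamma)$, this yields
\[
\omega_G(\lambda_{j,1})=\nabla_G\text{-deg}\bigl(\nabla_v J(\lambda_-,\cdot),\mathscr U\bigr)\cdot\bigl((\Gamma)-\nabla\text{-deg}_{\mathcal W_{j,1}}\bigr),
\]
with the first factor a unit of $U(\Gamma)$.

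Next I would apply the ring homomorphism $\pi_0$ of \eqref{eq:pi_0-homomorphism}, under which $\nabla\text{-deg}_{\mathcal W_{j,1}}$ becomes the element displayed in \eqref{eqn:basicdegree} at $l=1$ and the first factor remains a unit of $A(\Gamma)$. To read off coefficients I would pass to the marks $n^{H}$, which are ring homomorphisms on $A(\Gamma)$ and which, for a maximal orbit type $(K)$, obey $|N(K)/K|\,\mathrm{coeff}^{K}(x)=n^{K}(x)-\mathrm{coeff}^{\Gamma}(x)$. Using that the first factor is a unit (so each of its marks is $\pm1$) and that $\mathrm{coeff}^{\Gamma}\bigl((\Gamma)-\nabla\text{-deg}_{\mathcal W_{j,1}}\bigr)=0$, a short computation gives, for every maximal $(K)$,
\[
\mathrm{coeff}^{K}\bigl(\pi_0\,\omega_G(\lambda_{j,1})\bigr)=\mp\,\mathrm{coeff}^{K}\bigl(\nabla\text{-deg}_{\mathcal W_{j,1}}\bigr).
\]
Inspecting \eqref{eqn:basicdegree} at $l=1$, the subgroups typeset in red are exactly the maximal orbit types with nonzero coefficient in $\nabla\text{-deg}_{\mathcal W_{j,1}}$; collecting these over $j=0,4,7,7^*,8,9$ and removing repetitions — the three types common to $j=7$ and $j=9$, and the complete coincidence of the $j=7$ and $j=7^*$ lists — leaves exactly the $16$ distinct maximal orbit types written out in 1)--5), each of which therefore occurs with nonzero coefficient in some $\omega_G(\lambda_{j,1})$; grouping them by their $O(2)$-component yields the $3$ standing, $8$ anti-phase and $5$ rotating types noted in the Introduction. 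Applying the global-bifurcation property to each completes the proof.

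The main obstacle is this last algebraic step. One must verify that every red-typeset subgroup in \eqref{eqn:basicdegree} is genuinely a maximal element of the poset of orbit types occurring in $\mathscr U$ — equivalently, the only orbit type strictly above it there is $(\Gamma)$ — for only then is the mark identity for $\mathrm{coeff}^{K}$ as simple as stated, and only then is interference from the lower-order terms of the Euler-ring product (including the red terms carried in the other basic-degree factors of the first factor, by the ordering \eqref{sq:ordering}) excluded; and one must check that $\pi_0$ loses no maximal orbit type, i.e. that each such subgroup has finite Weyl group, as asserted in Subsection \ref{sec:3.4}, so that it survives truncation to $A(\Gamma)$. Both facts are implicit in the GAP output producing \eqref{eqn:basicdegree} but must be spelled out. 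A secondary bookkeeping point, already flagged above, is the equivalent pair $V_7\simeq V_7^*$: one must confirm that $\ker\mathscr A(\lambda_{7,1})$ (resp. $\ker\mathscr A(\lambda_{7^*,1})$) is a single irreducible copy of $\mathcal W_7$, so that exactly one basic-degree factor is toggled at each of these two critical numbers.
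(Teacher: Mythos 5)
Your proposal is correct and follows the same overall architecture as the paper's proof: isolate the orbit via Section \ref{sec:3.2}, apply the Slice Criticality Principle \eqref{thm:SCP} to work in $U(\Gamma)$ with $\Gamma=O(2)\times S_4^p$, express $\omega_G(\lambda_{j,1})$ as a product of basic degrees times $\bigl((\Gamma)-\nabla\text{-deg}_{\mathcal W_{j,1}}\bigr)$ using the spectral formula of Section \ref{sec:3.3}, then show the coefficient at each maximal orbit type survives. The one place you diverge technically from the paper is the coefficient extraction: the paper uses Lemma \eqref{le:basic_coefficient} plus the recurrence formula \eqref{eq:rec-coef} to compute $(H)\ast(H)=|W(H)|(H)+\cdots$ directly and verify the cancellation $2n_H+n_H^2|W(H)|=0$ when two basic-degree factors both carry $(H)$, whereas you pass to the mark homomorphisms $n^K:A(\Gamma)\to\mathbb Z$ and exploit their multiplicativity and the identity $|W(K)|\,\mathrm{coeff}^K(x)=n^K(x)-\mathrm{coeff}^\Gamma(x)$ valid at maximal $(K)$. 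Both routes yield $\mathrm{coeff}^K\bigl(\pi_0\,\omega_G(\lambda_{j,1})\bigr)=\mp\,\mathrm{coeff}^K\bigl(\nabla\text{-deg}_{\mathcal W_{j,1}}\bigr)$; yours is a bit cleaner since multiplicativity of marks handles all the preceding factors in one stroke, but it depends on exactly the same underlying fact (maximality implies only $(\Gamma)$ sits above $(K)$) that the paper uses to trivialize the recurrence. The caveats you flag at the end — that the red-typeset subgroups are genuinely maximal with $\dim W(K)=0$ so they survive $\pi_0$, and that isotypic nonresonance forces $\ker\mathscr A(\lambda_{7,1})$ and $\ker\mathscr A(\lambda_{7^*,1})$ to be single irreducible copies — are precisely what the paper defers to the GAP output in \eqref{eqn:basicdegree} and to the explicit hypothesis of the theorem, so your proof is complete modulo the same computational verifications the paper relies on.
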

\begin{proof}
    For $\lambda_{j_o,1}\in \Lambda,$ let $\lambda_-<\lambda_{j_o,1}<\lambda_+$ and $[\lambda_-, \lambda_+]\cap\Lambda=\{\lambda_{j_o,1}\}.$ From discussion in Section \ref{sec:3.2}, $\exists\; \mathscr U$ a tubular neighborhood of $G^(v^o)$ that is isolating so no other orbits from $J_{\lambda_{\pm}}$ is in $\Bar{\mathscr U}.$ Now we can apply Slice Principle (see Theorem \ref{thm:SCP}), i.e.
    \[
   \nabla_{G}\text{\textrm{-deg}}\Big(J_{\lambda_{\pm}}
,\mathscr U\Big)=\Theta\Big(\nabla_{G_{v^o}}\text{\textrm{-deg}}(J_{\lambda_{\pm}}
,\mathscr U\cap \bf S_o)\Big),
\]
where, in our case, $G=O(2)\times S_4^p\times O(3),\; G_{v^o}=O(2)\times S_4^p,$ and $\Theta: U(G_{v^o})\to U(G)$ is a homomorphism given by $\Theta(H)=(H),\; (H)\in \Phi_0(G).$ Therefore, the topological invariant now $\omega_G(\lambda_{j_o,1})$ takes the form
\begin{equation}\label{eqn:invariant}
\omega_G(\lambda_{j_o},1)=\nabla_{G_{v^o}}\text{\textrm{-deg}}(J_{\lambda_{-}}
,\mathscr U\cap \textbf {S}_o)-\nabla_{G_{v^o}}\text{\textrm{-deg}}(J_{\lambda_{+}}
,\mathscr U\cap \textbf {S}_o).
\end{equation}
We here omit $\Theta$ for notational convenience. \vs
On the other hand, we know that if $\omega_G(\lambda_{j_o},1)$ takes the form
\[
\omega_G(\lambda_{j_o},1)=n_1(H_1)+\cdots n_m(H_m),
\]
where $n_j\neq 0, \; (j=1,\cdots, m),$ then we say $\exists $ a branch of nontrivial solution bifurcating from $v^o$ with symmetry at least $(H_j).$ Our goal hereinafter is, for each $\lambda_{j_o,1}\in \Lambda$, to find the general form of $\omega_G(\lambda_{j_o},1)$ and more importantly, the coefficient $n_j$ for $(H_j).$ \vs
By linearization (see \eqref{eqn:linearization}), its computation based on $G$-equivariant basic degree \eqref{eq:grad-lin} and assumption of isotypic simplicity of critical numbers, one can derive the following
\[
\nabla_{G_{v^o}}\text{\textrm{-deg}}(J_{\lambda_{\pm}}
,\mathscr U\cap \textbf {S}_o)=\nabla_{G_{v^o}}\text{\textrm{-deg}}(\mathscr A_{\lambda_{\pm}}
,\mathscr U\cap \textbf {S}_o)
\]
and 
\[
\nabla_{G_{v^o}}\text{\textrm{-deg}}(\mathscr A_{\lambda_{-}}
,\mathscr U\cap \textbf {S}_o)=\prod_{\{(j,l)\in \mathbb N^2:\lambda_{j,l}<\lambda_{j_o,1}\}}\nabla\text{\textrm{-deg}}_{\mathcal W_{j,l}},
\]
\[
\nabla_{G_{v^o}}\text{\textrm{-deg}}(\mathscr A_{\lambda_{+}}
,\mathscr U\cap \textbf {S}_o)=\nabla\text{\textrm{-deg}}_{\mathcal W_{j_o,1}}\prod_{\{(j,l)\in \mathbb N^2:\lambda_{j,l}<\lambda_{j_o,1}\}}\nabla\text{\textrm{-deg}}_{\mathcal W_{j,l}}.
\]
Here, each $\nabla\text{\textrm{-deg}}_{\mathcal W_{j,l}}$ is the basic degree and has been directly computed by G.A.P. and shown in \eqref{eqn:basicdegree}.Thus by \eqref{eqn:invariant}, we obtain the following algorithm for computing $\omega_G(\lambda_{j_o},1).$
\begin{equation}\label{alg:algorithm_invariant}
\omega_G(\lambda_{j_o},1)=\prod_{\{(j,l)\in \mathbb N^2:\lambda_{j,l}<\lambda_{j_o,1}\}}\nabla\text{\textrm{-deg}}_{\mathcal W_{j,l}}\Big(O(2)\times S_4^p-\nabla\text{\textrm{-deg}}_{\mathcal W_{j_o,1}}\Big).
\end{equation}
More precisely,
\begin{align}\label{eq:invariant}
\omega_{G}(\lambda_{0,1}) &  =(O(2)\times S_{4}^p)-\nabla\text{\textrm{-deg}}_{\mathcal{W}_{0,1}%
},\\
\omega_{G}(\lambda_{7^*,1}) &  =\nabla\text{\textrm{-deg}}_{\mathcal{W}_{0,1}%
}\ast\Big((O(2)\times S_{4}^p)-\nabla\text{\textrm{-deg}}_{\mathcal{W}_{7^*,1}}\Big)\nonumber\\&=\nabla\text{\textrm{-deg}}_{\mathcal{W}_{0,1}%
}-\nabla\text{\textrm{-deg}}_{\mathcal{W}_{0,1}%
}\ast\nabla\text{\textrm{-deg}}_{\mathcal{W}_{7^*,1}},\nonumber\\
\omega_{G}(\lambda_{4,1}) &  =\nabla\text{\textrm{-deg}}_{\mathcal{W}_{0,1}%
}\ast\nabla\text{\textrm{-deg}}_{\mathcal{W}_{7*,1}}\ast\Big((O(2)\times S_{4}^p)-\nabla
\text{\textrm{-deg}}_{\mathcal{W}_{4,1}}\Big)\nonumber\\
&=\nabla\text{\textrm{-deg}}_{\mathcal{W}_{0,1}%
}\ast\nabla\text{\textrm{-deg}}_{\mathcal{W}_{7^*,1}}-\nabla\text{\textrm{-deg}}_{\mathcal{W}_{0,1}%
}\ast\nabla\text{\textrm{-deg}}_{\mathcal{W}_{7^*,1}}\ast\nabla
\text{\textrm{-deg}}_{\mathcal{W}_{4,1}},\nonumber\\
 \cdots& \nonumber
\end{align}
Indeed, we observe from \eqref{eqn:basicdegree} that all maximal orbit types are of zero dimensional weyl groups, so we can use \eqref{eqn:basicdegree} and algorithm \eqref{eq:invariant} to directly compute invariant $\omega_G(\lambda_{j_o},1),\; j_o=0,4,7,7^*,8,9.$ But notice that  $\lambda_{9,1}$ appears in the 29th in the sequence \eqref{sq:ordering}, the computation will introduce significant complexity. Given a maximal orbit type $(H)$ associated with a particular irreducible representation, in the following we examine the behavior of its coefficient in the invariant $\omega_G(\lambda_{j_o},1),$  in order to reduce this complexity.\vs
Suppose $(H)$ is a maximal orbit type in $\mathcal W_{j_o,1},$ i.e.
\[
\nabla
\text{\textrm{-deg}}_{\mathcal{W}_{j_o,1}}=O(2)\times S_4^p+n_H(H)+\cdots,
\]
Here $\cdots$ includes all submaximal. By Lemma \eqref{le:basic_coefficient},
\[
n_H=\begin{cases}
    -1,\;\;\; |W(H)|=2\\
    -2,\;\;\; |W(H)|=1.
\end{cases}
\]
Suppose $\nabla
\text{\textrm{-deg}}_{\mathcal{W}_{\tilde{j_o},1}}$ also contains $(H)$ with nonzero coefficient, then
\[
\nabla
\text{\textrm{-deg}}_{\mathcal{W}_{j_o,1}}*\nabla
\text{\textrm{-deg}}_{\mathcal{W}_{\tilde{j_o},1}}=O(2)\times S_4^p+2n_H(H)+n_H^2(H)*(H)+\cdots,
\]
where, from equation \eqref{eq:rec-coef}, $(H)*(H)=l_H(H)+\cdots,$ and $l_H$ takes the form
\[
l_H=\frac{n(H,H)|W(H)|n(H,H)|W(H)|}{|W(H)|}=|W(H)|.
\]
Therefore the coefficient of $(H)$ in this multiplication takes the form
\[
2n_H+n_H^2|W(H)|,
\]
which is equal to 0. Now, consider the bifurcation invarian $\omega_G(\lambda_{j_o,1})$ as defined in \eqref{alg:algorithm_invariant}. Suppose the orbit type  $(H)$ appears as a maximal type in the irreducible representation $\mathcal W_{j_o,1}$ with coefficient $n_H.$ Then $(H)$ also appears in the bifurcation invariant $\omega_G(\lambda_{j_o,1}),$  with coefficient $\tilde{\textbf{n}}_H$ given by 
\[
\tilde{\textbf{n}}_H=\begin{cases}
    \pm 1, \;\;\; |W(H)|=2\\
    \pm 2,\;\;\; |W(H)|=1.
\end{cases}
\]
\end{proof}\vs
One may refer to the following truncated invariant $\widetilde{\omega}_{G}(\lambda_{j_o,1})$ to $A(O(2)\times S_4^p)$ for further details. Note that all maximal isotropy types are highlighted in red.%
\begin{align*}
\widetilde{\omega}_{G}(\lambda_{0,1})= &  -{\color{red}(D_1\times S_4^p)},\\
\widetilde{\omega}_{G}(\lambda_{7^*,1})=& 2({D_1}^{\mathbb Z_1}\times _{\mathbb Z_1^p}\mathbb Z_1^p)+2({D_1}^{\mathbb Z_1}\times_{\mathbb Z_2^-}\mathbb Z_2^-)+2({D_1}^{\mathbb Z_1}\times _{D_1}D_1)+2({D_1}^{\mathbb Z_1}\times _{D_1^z}D_1^z)\\
&2({D_1}^{\mathbb Z_1}\times _{\mathbb Z_2}\mathbb Z_2)+({D_1}\times \mathbb Z_1)-2(D_1^{\mathbb Z_1}\times^{\mathbb Z_2^-} D_2^d)-4({D_1}^{\mathbb Z_1}\times ^{D_1^z}D_1^p)-2({D_1}^{\mathbb Z_1}\times ^{D_1^z}D_2^d)\\
&-2({D_2}^{\mathbb Z_1}\times_{\mathbb Z_2^p}\mathbb Z_2^p)-2({D_1}^{\mathbb Z_1}\times ^{\mathbb Z_2^-}\mathbb Z_2^p)-2({D_1}^{\mathbb Z_1}\times^{\mathbb Z_2^-}V_4^-)-2({D_1}^{\mathbb Z_1}\times^{D_1^z}D_2^z)+(D_1\times D_3^z)\\
&-2({D_2}^{D_1}\times _{\mathbb Z_1^P}\mathbb Z_1^p)+(D_1\times D_4^z)+2({D_1}^{\mathbb Z_1}\times^{D_3^z}D_3^p)\\
&+2({D_1}^{\mathbb Z_1}\times^{D_2^d}D_2^p)+2({D_2}^{\mathbb Z_1}\times^{\mathbb Z_2^-}_{D_2}V_4^p)+({D_2}^{D_1}\times^{\mathbb Z_2^-}\mathbb Z_2^p)
+2({D_2}^{\mathbb Z_1}\times ^{D_1^z}_{D_2}D_2^p)+2(D_2^{D_1}\times_{D_1^z}D_1^p)\\
&-(D_1\times \mathbb Z_2^-)-2(D_1\times D_1^z)-2\textcolor{red}{({D_6}^{\mathbb Z_1}\times_{D_3^p} D_3^p)}-2(\textcolor{red}{{D_4}^{\mathbb Z_1}\times ^{\mathbb Z_2^-}D_4^p})
-\textcolor{red}{(D_2^{D_1}\times ^{D_2^d}D_2^p)}\\
&-\textcolor{red}{(D_2^{D_1}\times ^{D_3^z}D_3^p)}-\textcolor{red}{(D_2^{D_1}\times ^{D_4^z}D_4^p)},\\
\widetilde{\omega}_{G}(\lambda_{4,1})=&-2({D_1}^{\mathbb Z_1}\times _{D_1}D_1)-2({D_1}^{\mathbb Z_1}\times_{D_1^z}D_1^z)-2({D_2}^{\mathbb Z_1}\times _{D_2^d}D_2^d)-2({D_2}^{\mathbb Z_1}\times _{D_1^p}D_1^p)+2(D_1^{\mathbb Z_1}\times ^{D_1^z}D_2^d)\\
&+2(\mathbb Z_1\times {D_2^d})-2({D_2}^{\mathbb Z_1}\times _{D_2}D_2)+2({D_1}^{\mathbb Z_1}\times ^{D_1^z}D_2^z)-({D_2}^{D_1}\times_{D_1}D_1)+(D_1\times D_1^z)\\
&+2({D_3}^{\mathbb Z_1}\times _{D_3}D_3)+2({D_3}^{\mathbb Z_1}\times _{D_3^z}D_3^z)+2({D_2}^{\mathbb Z_1}\times ^{\mathbb Z_2^-}_{D_2}D_2^p)-2({D_1}^{\mathbb Z_1}\times ^{D_2^d}D_2^p)+({D_2}^{D_1}\times ^{\mathbb Z_2^-}D_2^d)\\
&-(D_1\times D_2^d)+2({D_2}^{\mathbb Z_1}\times ^{V_4^-}_{D_2}D_4^p)-2({D_1}^{\mathbb Z_1}\times ^{D_4^z}D_4^p)+2({D_1}^{\mathbb Z_1}\times ^{V_4^p}D_4^p)+({D_2}^{D_1}\times ^{V_4^-}D_4^d)\\
&-(D_1\times D_4^z)-\textcolor{red}{({D_2}^{D_1}\times ^{V_4^p}D_4^p)}+\textcolor{red}{(D_1\times D_4^p)}-2\textcolor{red}{({D_3}^{\mathbb Z_1}\times ^{V_4^p}_{D_3}S_4^p)},\\
\cdots &
\end{align*}
\section{Description of Symmetries.}\label{sec:4}
We now provide a detailed explanation of the 16 maximal orbit types. Below, the maximal orbit types $\mathscr H$ are categorized into three classes.
\begin{enumerate}
    \item[1.] $\mathscr H=D_1\times K,$ where $ K\leq S_4^p.$
    \item[2.] $\mathscr H=D_2^{D_1}\times^{K_o}_L K,$ where $K\leq S_4^p,$ and $ L=K/K_o$ isomorphic to $\mathbb Z_2$.
    \item[3.] $\mathscr H=H^{\mathbb Z_1}\times^{K_o}_L K,$ where $K\leq S_4^p,$ and $ L=K/K_o$ isomorphic to $H.$
\end{enumerate} 
Note that all generators presented in this section are derived from the subgroups computed by G.A.P (see \ref{app:subgroups}), together with the conversion given in \eqref{eq:s6s4p}. Moreover, for each maximal orbit type, we provide an animation. To generate this animation, one should refer to the concrete eigensystem listed in Table \eqref{tbl:eigensystem}.

\noi 1. Direct product $\mathscr H=D_1\times K,\; K\leq S_4^p.$
\begin{enumerate}
\item[(1)] $D_1\times S_4^p\; (j=0):$ This group is generated by $S^p_4$ and $\kappa\in D_1< O(2).$ The solution has full octahedral symmetries all the time.  In particular, they satisfy
\[
u(t)=\kappa u(t)=u(-t),\; \dot{u}(0)=\dot{u}(\pi)=0,
\]
meaning this is a brake orbit exhibiting a breathing mode. 
The group’s explicit generators are:
\[
\Big(\kappa,1\Big), \;\Big(1, (1234)\Big),\; \Big(1, (146)(253)\Big),\Big(1, (13)(24)(56)\Big).
\]
\item[(2)]$D_1\times D_3^p\; (j=8):$ This group, like the previous one, defines a brake orbit and it preserves triangular symmetry throughout the motion. The generators are given by:
\[
\Big(\kappa, 1\Big), \Big(1,(346)(125)\Big),\Big(1,(14)(23)(56)\Big),\Big((1,(13)(24)(56)\Big).
\]
\item[(3)]$D_1\times D_4^p\;(j=4):$ Another brake orbit. Position of atom 1 decides atom 2, 3, 4 simultaneously, atom 5 decides atom 6, but there is no relation between these two pairs. Generators are
\[
\Big(\kappa, 1\Big), \Big(1,(1234)\Big),\Big(1,(14)(23)(56)\Big),\Big((1,(13)(24)(56)\Big).
\]
\end{enumerate}
\vs
2. Amalgamated group $\mathscr H=D_2^{D_1}\times^{K_o}_L K,$ where $K\leq S_4^p.$
\begin{enumerate}
    \item $D_2^{D_1}\times ^{D_2^d}D_2^p\;(j=9):$ Notice that $D_2^d=<(56),(14)(23)(56)>$ and $D_2^p/D_2^d\cong <(12)(34)(56)>,$ so the group is generated by 
    \[
    \kappa\in D_1< O(2); \; (14)(23)(56),(56)\in D_2^d < S_4^p; \; \Big(e^{i\pi},(12)(34)(56)\Big)\in O(2)\times S_4^p.
    \]
 It's still a brake orbit. Here $M_{(14)(23)(56)}$ 
  denotes the reflection through the plane containing points $p_1,p_2,p_3,$ with respect to the axis connecting the midpoints of segments $p_1 p_4$, $p_2p_3.$ So $v_1$ is an inversion of $v_4,$ $v_2$ is an inversion of $v_3$ and $v_5$ is an inversion of $v_6$. Moreover, there are 3 pairs of $\pi$ rotation for $u_1$ and $u_2$, $u_3$ and $u_4$, $u_5$ and $u_6$, together with $\pi$ time shift.\vs
  \item $D_2^{D_1}\times ^{D_1^p}D_2^p\;(j=8):$ $D_1^p=<(14)(23)(56),(13)(24)(56)>$ and $D_2^p/D_1^p\cong\mathbb Z_2=<(13)(24)>.$ Thus, the generators for this brake orbit are:
  \[
  \Big(\kappa,1\Big),\Big(1,(14)(23)(56)\Big),\Big(1,(13)(24)(56)\Big),\Big(e^{i\pi},(13)(24)\Big).
  \]
  Here, $M_{(13)(24)}$ represents a $\pi$ -rotation about the axis through points $p_5p_6,$  yielding two coupled pairs of 
 $\pi$ rotation: one pair acting on $v_1$, $v_3$, another pair acting on $v_2$, $v_4$, each coupled to a $\pi$-phase shift. \vs
    \item $D_2^{D_1}\times ^{D_3^z}D_3^p\;(j=7): $ Notice that $D_3^z$ is generated by $<(346)(125),(12)(34)>$, $D_3^p$ generated by $<(346)(125),(14)(23)(56),(13)(24)(56)>$ and $D_3^p/D_3^z\cong<(13)(24)(56)>$. Thus one can easily get the generators of the whole group: 
    \[
    \Big(\kappa,1\Big),\Big(1,(346)(125)\Big),\Big(1,(12)(34)\Big),\Big(e^{i\pi},(13)(24)(56)\Big).
    \]
 Obviously, another brake orbit. Here $M_{(125)(346)}$ denotes the rotation by $2\pi/3$ around the axis passing through the centers of the triangular faces formed by vertices $p_1p_2p_5$ and $p_3p_4p_6.$ The antipodal reflection of the configuration is coupled with a $\pi$-phase shift.\vs
 \item $D_2^{D_1}\times ^{D_3}D_3^p\;(j=9): $ Similarly, the solution has triangular symmetry for $D_3,$ and the antipodal reflection is coupled with a $\pi$-phase shift. See generators of the group:
 \[
    \Big(\kappa,1\Big),\Big(1,(346)(125)\Big),\Big(1,(14)(23)(56)\Big),\Big(e^{i\pi},(13)(24)(56)\Big).
 \]
    \item $D_2^{D_1}\times ^{D_4^z}D_4^p:$ $D_4^p$ is generated by $(14)(23)(56),(13)(24)(56),(1234),$ and $D_4^z=<(12)(34),(1234)>$. Since $D_4^p/D_4^z=<(13)(56)>,$ one can easily derive the following conclusion: $D_2^{D_1}\times ^{D_4^z}D_4^p$ admits the generators 
    \[
    \Big(\kappa,1\Big), \; \Big(1,(12)(34)\Big),\;\Big(1,(1234)\Big),\; \Big(e^{i\pi},(13)(56)\Big)
    \]
Here $M_{(1234)}$ denotes the $\pi/2$ rotation of the plane $p_1p_2p_3p_4$ and these four atoms interchange. $M_{(13)(56)}$ is the rotation by $\pi$ along the axis of $p_2p_4,$ so that $v_1$ is an inversion of $v_3$, $v_5$ is an inversion of $v_6$, both of which are coupled with a $\pi$-phase shift. Notice that there exists two distinct pairs, i.e. $v_1$ decides $v_2,v_3,v_4$, and $v_6$ is only decided by $v_5$.\vs
 \item ${D_2}^{D_1}\times ^{V_4^p}D_4^p\;(j=4):$ The solutions have symmetry of $V_4.$ Notice that $D_4^p/V_4^p=<(14)(23)(56)>,$  the spatial rotation by $\pi$ around the axis connecting the midpoints of segments  $p_1p_4$ and $p_2p_3$ is combined with a temporal shift of $\pi$.\vs

 \item $D_2^{D_1}\times ^{D_4^d}D_4^p\;(j=9):$ The generators are
 \[
 \Big(\kappa,1\Big),\Big(1,(14)(23)(56)\Big),\Big(1,(24)\Big),\Big(1,(13)(24)\Big),\Big(e^{i\pi},(13)(56)\Big).
 \]
 The $\pi$ rotation along axis $p_1p_3$ is coupled with $\pi$ phase shift. Once position of $v_1$ is known, then $v_2,v_3,v_4$ can be determined, $v_5$ and $v_6$ is another pair. It is, again, brake orbit.\vs
  \item $D_2^{D_1}\times ^{D_2^p}D_4^p\;(j=8):$ The generators are
 \[
 \Big(\kappa,1\Big),\Big(1,(14)(23)(56)\Big),\Big(1,(13)(24)\Big),\Big(e^{i\pi},(13)(56)\Big).
 \]
 Similarly, the solutions are symmetric by $D_2,$ while the reflection of $D_4$ is coupled with $\pi$ phase shift, and $u_1(t)=M_{(13)(56)}u_3(t+\pi),\;u_5(t)=M_{(13)(56)}u_6(t+\pi).$
\end{enumerate}\vs

\noi 3. Amalgamated group $\mathscr H=H^{\mathbb Z_1}\times^{K_o}_L K,$ where $K\leq S_4^p.$
\begin{enumerate}
\item $D_6^{\mathbb Z_1}\times _{D_3^p}D_3^p\;(j=9):$ Consider the spatial group \[
D_3^p=<(346)(125),(14)(23)(56),(13)(24)(56)>, 
\]
and the temporal $D_6$. We define $\mathscr H$ such that each temporal symmetry corresponds exactly to a spatial one. Generators of $\mathscr H$ are  
\[\Big<\Big(e^{i\frac{\pi}{3}},(145326)\Big), \Big(\kappa,(14)(23)(56)\Big)\Big>.
\]
Therefore, $v_1(t)\; (resp.\; v_2(t),\; v_5(t))$ is an inversion of $v_4(-t)\; (resp.\; v_3(-t), \; v_6(-t)).$ There is also a $\pi/3$-rotoreflection coupled with a temporal phase shift by $\pi/3.$\vs
\item $D_4^{\mathbb Z_1}\times ^{\mathbb Z_2^-}D_4^p\;(j=9):$ $D_4^p$ is generated by $<(1234),(14)(23)(56),(13)(24)(56)>,$ where $(1234)$ denotes rotation of particle $p_1,\cdots, p_4$, $(14)(23)(56)$ the reflection along midpoints of $p_1p_4,p_2p_3$ axis and $(13)(24)(56)$ the antipodal reflection. Notice that the configuration of this molecule has symmetry $\mathbb Z_2^-=<(56)>$ all the time,  and the generators are given by
\begin{equation}\label{eq:generator}
\Big<\Big(e^{i\frac{\pi}{2}},(1234)\Big),\Big(\kappa, (14)(23)(56)\Big),\Big(e,(56)\Big)\Big>.
\end{equation}
So $v_5$ and $v_6$ always share the same dynamics and $\Big(\kappa,(14)(23)(56)\Big)$ shows the same dynamics as above. There is also a $\pi/2$ -rotoreflection among atoms $v_1,v_2,v_3,v_4$ with $\pi/2$ phase shift.
\vs
\item $D_3^{\mathbb Z_1}\times _{D_3}^{V_4^p}S_4^p\;(j=4):$ Suposse $V_4^p$ generated by $\Big<(24)(56),(13)(56),(13)(24)(56)\Big>$and spatial $D_3$ generated by $\Big<(346)(125),(14)(23)(56)\Big>$ while temporal $D_3$ with the corresponding generators $\Big<e^{i\frac{2\pi}{3}},\kappa\Big>$. So the generators are given by
\[
\Big<\Big(e^{i\frac{2\pi}{3}},(346)(125)\Big), \; \Big(\kappa, (14)(23)(56)\Big),\Big(e,(24)(56)\Big),\Big(e,(13)(56)\Big),\Big(e,(13)(24)(56)\Big)\Big>
\]
one can conclude that in these solutions, we have three pairs of particles with identical dynamics all the time, i.e. $v_1 \& v_3$, $v_2 \& v_4$, $v_5\& v_6$. Moreover, $v_1(t)$ is an inversion of $v_4(-t)$, $v_2(t)$ is an inversion of $v_3(-t).$ The $2\pi/3$ rotation around
the axis passing through the centers of the triangular faces formed by vertices $p_1p_2p_5$ and
$p_3p_4p_6$ is coupled with temporal time shift by $2\pi/3.$\vs
\item $D_3^{\mathbb Z_1}\times _{D_3}D_3^p\;(j=8):$   The generators are given by
\[
\Big<\Big(e^{i\frac{2\pi}{3}},(346)(125)\Big), \; \Big(\kappa, (14)(23)(56)\Big),\Big(e,(13)(24)(56)\Big)\Big>
\]
Similar to above, the antipodal particles always share the same dynamics, the $2\pi/3$-phase shift is coupled with $2\pi/3$ rotoreflection, and spatial reflection across the axis connecting the midpoints of segments $p_1p_4$ and $p_2p_3$.\vs
\item $D_4^{\mathbb Z_1}\times _{D_4}D_4^p\;(j=8):$   The generators are given by
\[
\Big<\Big(e^{i\frac{\pi}{2}},(1234)\Big), \; \Big(\kappa, (14)(23)(56)\Big),\Big(e,(13)(24)(56)\Big)\Big>
\]
So the $\pi/2$ rotoreflection of $v_1,v_2,v_3,v_4$ is coupled with $\pi/2$ time shift. The solution is governed by
\[
\begin{cases}
u_1(t)=M_{(14)(23)(56)}u_4(-t),\;u_2(t)=M_{(14)(23)(56)}u_3(-t),\; u_5(t)=M_{(14)(23)(56)}u_6(-t),\\
u_1(t)=M_{(13)(24)(56)}u_3(t),\;u_2(t)=M_{(13)(24)(56)}u_4(t),\; u_5(t)=M_{(13)(24)(56)}u_6(t),\\
u_1(t)=M_{(1234)}u_4(t+\pi/2),\;u_2(t)=M_{(1234)}u_1(t+\pi/2),\; u_3(t)=M_{(1234)}u_2(t+\pi/2).
\end{cases}
\]
\end{enumerate}

We compute the explicit linearized solutions for all detected symmetries. See Fig \eqref{gif:Dynamics2} for part of the symmetries detected. We show in the right column the norms of particles oscillating in time. Note that modes (A) and (B)  correspond, respectively, to the dynamics presented in Figures 2(a) and (b) of \cite{chechin2015nonlinear}, which characterize breathing modes. 

\begin{figure}[H]
\begin{subfigure}{\textwidth}
\centering
\includegraphics[width=0.25\textwidth]{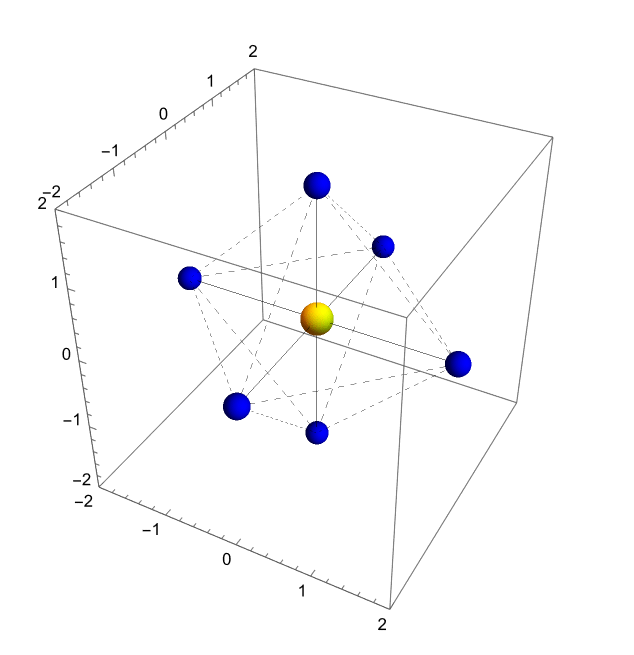}
  \hspace{0.1\textwidth}
\includegraphics[width=0.4\textwidth]{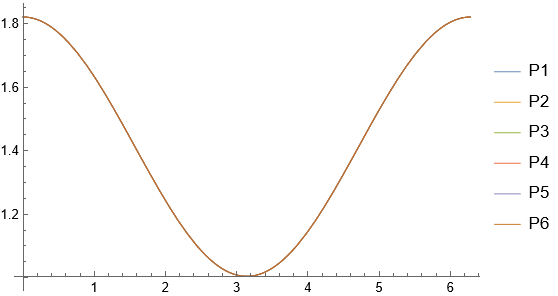}\label{gif:breathing}
\caption{ $D_1 \times S_4^p\;(j=0)$}
\end{subfigure}

\par\vspace{0.3cm}\par
\begin{subfigure}{\textwidth}
\centering
\includegraphics[width=0.25\textwidth]{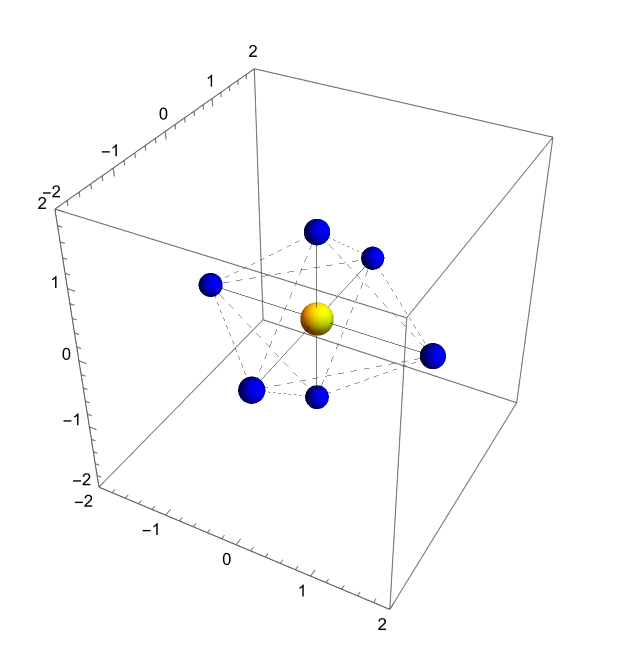}
\hspace{0.1\textwidth}
\includegraphics[width=0.4\textwidth]{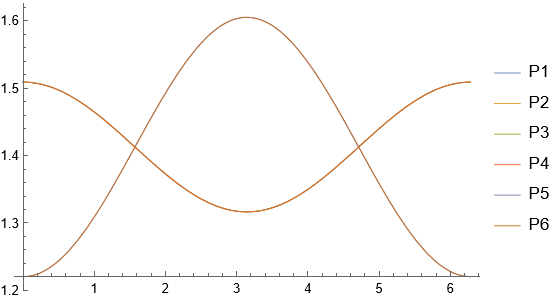}
\caption{\small $D_1 \times D_4^p\;(j=4)$}
\end{subfigure}
\par\vspace{0.3cm}\par

\begin{subfigure}{\textwidth}
\centering
\includegraphics[width=0.25\textwidth]{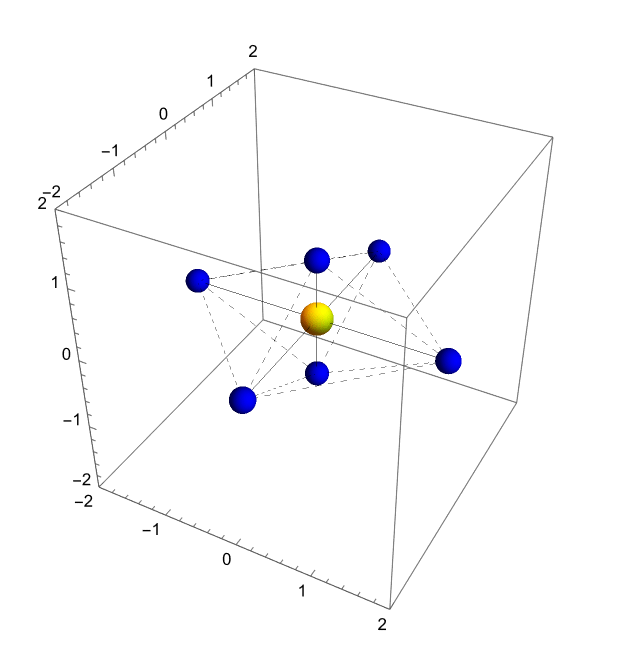}
\hspace{0.1\textwidth}
\includegraphics[width=0.4\textwidth]
{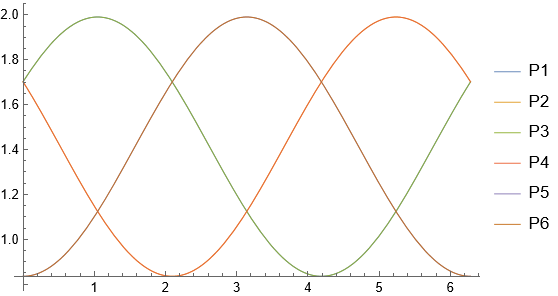}
\caption*{\small (C) $D_3^{\mathbb Z_1}\times^{V_4^p}S_4^p\;(j=4)$}
\end{subfigure}
\par\vspace{0.4cm}\par
\begin{subfigure}{\textwidth}
    \centering    \includegraphics[width=0.25\textwidth]{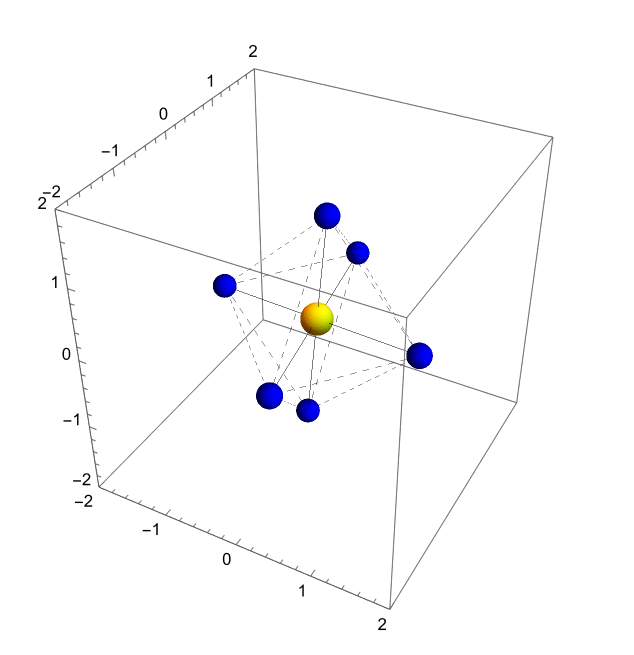}
     \hspace{0.1\textwidth}   \includegraphics[width=0.4\textwidth]{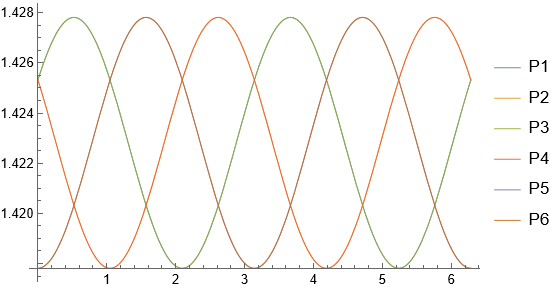}
        \caption*{\small (D) $D_3^{\mathbb Z_1}\times_{D_3}D_3^p\;(j=8)$}
\end{subfigure}    
\end{figure}
\par\vspace{0.3cm}\par

\begin{figure}[H]
\begin{subfigure}
{\textwidth}
\centering
\includegraphics[width=0.25\textwidth]{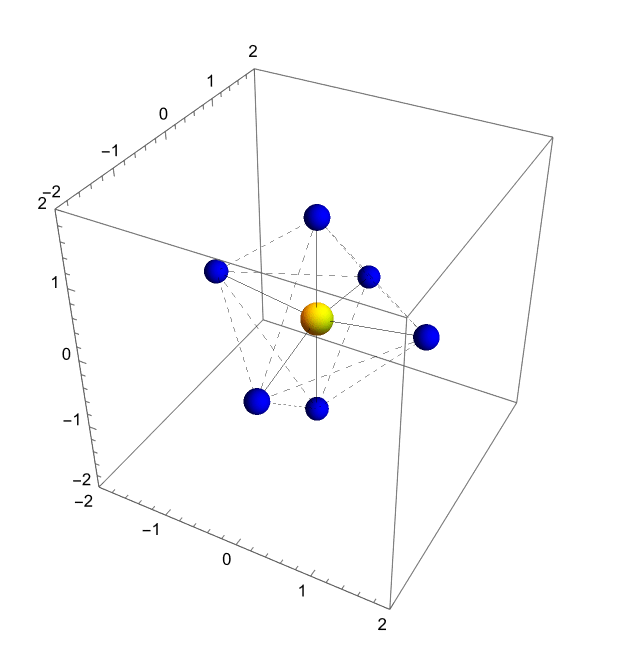}
\hspace{0.1\textwidth}       \includegraphics[width=0.4\textwidth]{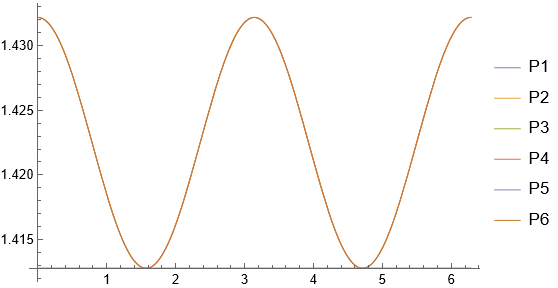}
\caption*{\small (E) $D_2^{D_1}\times ^{D_4^d}D_4^p\;(j=9)$}
\end{subfigure}
\par\vspace{0.3cm}\par

\begin{subfigure}{\textwidth}
\centering
\includegraphics[width=0.22\textwidth]{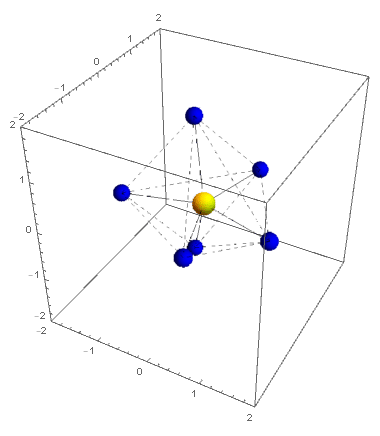}
\hspace{0.1\textwidth} 
\includegraphics[width=0.4\textwidth]{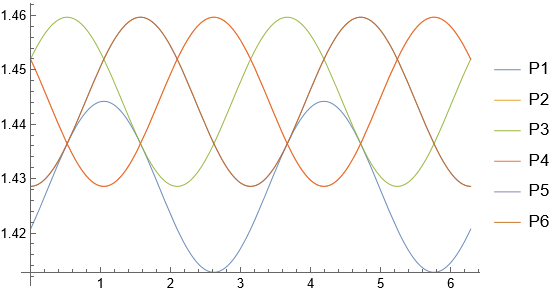}
\caption*{\small (F)$\;D_6^{\mathbb Z_1}\times_{D_3^p}D_3^p\;(j=9)$}
\end{subfigure}

\par\vspace{0.3cm}\par

\begin{subfigure}{\textwidth}
\centering
\includegraphics[width=0.22\textwidth]{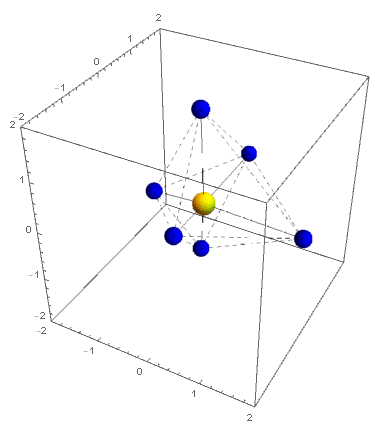}
\hspace{0.1\textwidth}         \includegraphics[width=0.4\textwidth]{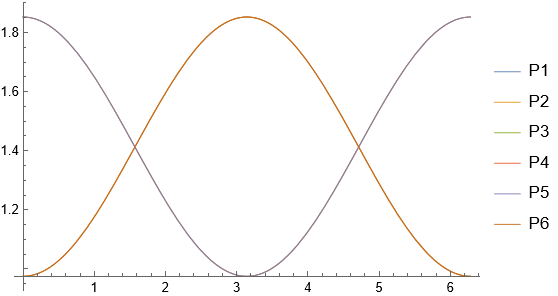}
\caption*{\small (G)$\;D_2^{D_1}\times^{D_3^z}D_3^p\;(j=7^*)$}
\end{subfigure}

\par\vspace{0.2cm}\par

\begin{subfigure}{\textwidth}
        \centering        \includegraphics[width=0.22\textwidth]{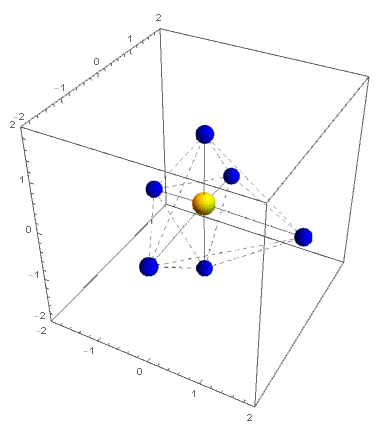}
  \hspace{0.1\textwidth}     \includegraphics[width=0.4\textwidth]{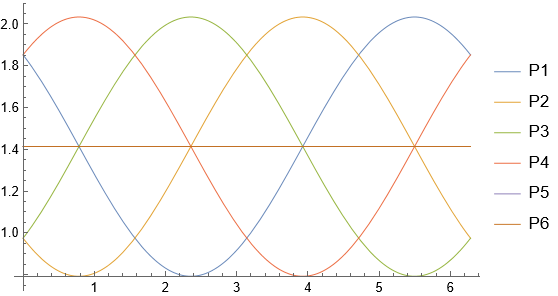}
        \caption*{\small (H)$\;D_4^{\mathbb Z_1}\times ^{\mathbb Z_2^-}D_4^p\;(j=7^*)$}
    \end{subfigure}
\par\vspace{0.3cm}\par
 \begin{subfigure}{\textwidth}
        \centering
        \includegraphics[width=0.25\textwidth]{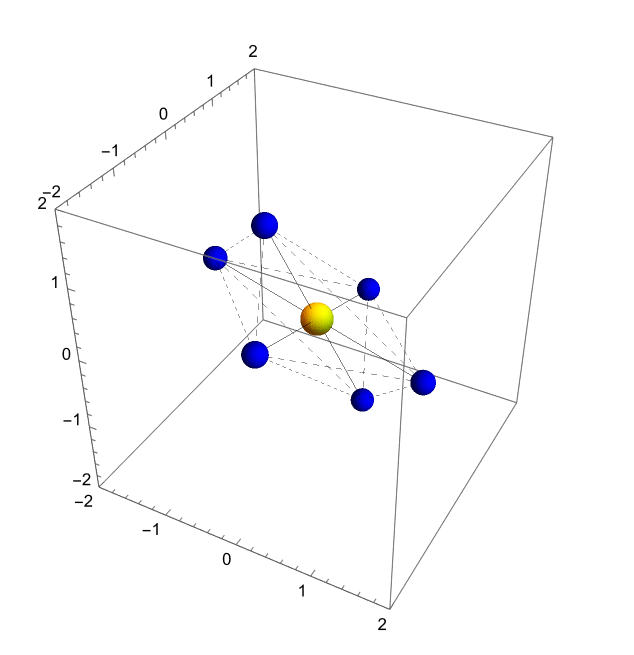}
  \hspace{0.1\textwidth}           \includegraphics[width=0.4\textwidth]{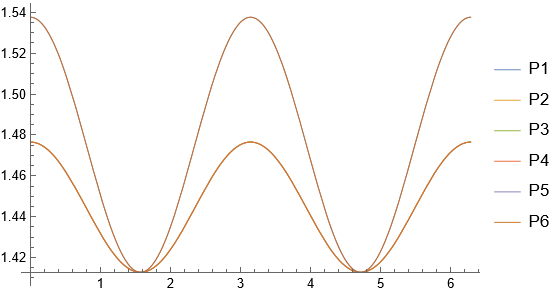}
        \caption*{\small (I) $\;D_2^{D_1}\times^{D_1^p}D_2^p\;(j=8)$}
    \end{subfigure}

    \caption{\small Vibrational modes }
    \label{gif:Dynamics2}
\end{figure}\vs
\noi  For animated illustrations of all vibrational modes, please refer to: 
\url{https://github.com/violal1016/Octahedral-Molecule/blob/main/Dynamics_Animation/j%3D8/D1xD3p_(j%3D8).gif} 

\appendix
\section{Conjugacy classes of $S_4^p$}\label{app:A}

\subsection{Conjugacy classes of $S_4^p$}
\label{app:subgroups}
Below, we introduce notation for representatives of the conjugacy classes of subgroups in $S_{4}$:
\begin{align}\label{eq:con_S4}
{\mathbb{Z}}_{1}  &  =<(1)>,\quad{\mathbb{Z}}_{2}=<(12)(34)>,\quad
{\mathbb{Z}}_{3}   =<(234)>,\quad V_{4}
=<(14)(23),(12)(34)>,\\
A_{4}  &
=<(14)(23),(12)(34),(234)>,\quad
D_{4}=<(1234),(24)>,\quad
{\mathbb{Z}}_{4}   =<(1234)>,\nonumber\\
D_{3}  &  =<(234),(24)>,\quad
D_{2}  =<(13)(24),(24)>,\quad
D_{1}   =<(24)>,\nonumber\\
S_4&=<(14)(23),(13)(24),(234),(24)>.\nonumber
\end{align}
Then, up to conjugacy, the subgroups of $S_4^p=S_4\times{\mathbb{Z}}_{2}$,
fall into two primary categories: \newline(i) Product subgroups of the form
$H\times{\mathbb{Z}}_{2}$, where $H$ is a subgroup of $S_4$;
\newline(ii) Twisted subgroups defined by 
\[
H^{\varphi}:=\{(g,z)\in S_4\times{\mathbb{Z}}_{2}:\varphi(g)=z\},
\]
where $\varphi:H\rightarrow{\mathbb{Z}}_{2}$ is a
homomorphism and $H\leq S_4$. \vs
To be more precise, when $\varphi:H\rightarrow{\mathbb{Z}}_{2}$ is a trivial homomorphism, the resulting twisted subgroup $H^{\varphi}$ coincides with the subgroups $H$ of $S_4$. This covers subgroups of the type $\mathbb Z_n (n=1,2,3,4)$, $V_4$, $A_4$, $D_{n} (n=1,2,3,4)$, $S_4.$ We also introduce the following twisted subgroups:
\begin{enumerate}[label={\textbullet}]
\item $D_n^z,$ where
$z:D_{n}\rightarrow{\mathbb{Z}}_{2}$ denotes the epimorphism satisfying
$\ker(z)={\mathbb{Z}}_{n}$, 
\item $D_{2n}^d,$ where $d:D_{2n}\rightarrow{\mathbb{Z}}_{2}$ denotes the
epimorphism with $\ker(d)=D_{n}$,
\item $\mathbb Z_{2n}^d,$ where $d:{\mathbb{Z}}_{2n}\rightarrow{\mathbb{Z}
}_{2}$ denotes the epimorphism with $\ker(d)={\mathbb{Z}}_{n}$, 
\item $S_4^-,$ where ${\ -}$ denotes the
epimorphism $\varphi:S_{4}\rightarrow{\mathbb{Z}}_{2}$ with $\ker
(\varphi)=A_{4}$,
\item $V_4^-$, where $-$ denotes the epimorphisms $\varphi
:V_{4}\rightarrow{\mathbb{Z}}_{2}$ with $\ker(\varphi)={\mathbb{Z}}_{2}$.
\end{enumerate}
In addition to subgroups in \eqref{eq:con_S4}, we also list the other 22 representative of conjugacy classes of subgroups in $S_4^p$ as follows, which are computed by G.A.P:
\begin{align*}
\mathbb Z_1^p&=<(56)>,\quad D_1^p=<(24),(56)>,\quad D_1^z=<(24)(56)>,\nonumber\\   
\mathbb Z_2^p &=<(12)(34),(56)>,\quad \mathbb Z_2^-=<(12)(34)(56)>,\nonumber\\
D_2^p&=<(13)(24),(24),(56)>,\quad D_2^z=<(13)(24),(24)(56)>,\nonumber\\
D_2^d&=<(13)(24)(56),(24)>,\quad \mathbb Z_4^d=<(1234)(56),(13)(24)>,\nonumber\\
V_4^-&=<(14)(23)(56),(12)(34)>,\quad
V_4^p=<(14)(23),(12)(34),(56)>,\\
D_4^z&=<(1234),(24)(56)>,\quad
D_4^d=<(13)(24),(24),(12)(34)(56)>,\\
D_4^{\tilde d}&=<(14)(23),(12)(34),(24)(56)>,\quad D_4^p=<(1234),(24),(56)>\nonumber\\
\mathbb Z_3^p&=<(234),(56)>,\quad
D_3^z=<(234),(24)(56)>,\quad D_3^p=<(234),(24),(56)>,\nonumber\\
A_4^p&=<(14)(23),(12)(34),(234),(56)>,\quad
S_4^-=<(14)(23),(12)(34),(234),(24)(56)>,\nonumber\\
S_4^p&=<(14)(23),(12)(34),(234),(24),(56)>,\quad \mathbb Z_4^p=<(1234),(56)>.\nonumber
\end{align*}
Note, we use the amalgamated notation in this work to describe subgroups of $G=O(2)\times S_4^p.$ For details of subgroups of $O(2)$ and explanation of amalgamated subgroups in direct product group $G_1\times G_2,$ one can refer to Chapter 5.3 of \cite{balanov2006applied} and Appendix E of \cite{balanov2025degree}, repsectively.
\section{Euler and Burnside Rings}\label{app:C}
\vskip.3cm

\noindent\textbf{Euler and Burnside Rings\label{app:Euler}}

In this section we assume that $G$ stands for a compact Lie group and we
denote by $\Phi(G)$ the set of all conjugacy classes $(H)$ of closed subgroups
$H$ of $G$. For any $(H)\in\Phi(G)$ we denote by $N(H)$ the normalizer of $H$
and by $W(H):=N(H)/H$ the Weyl's group of $H$. \vskip.3cm

Notice that $\Phi(G)$ admits a natural order relation given by
\begin{equation}
\label{eq:order}(K)\le(H) \;\; \Leftrightarrow\;\; \exists_{g\in G}\; \;
gKg^{-1}\subset H, \;\; \text{ for }\;\; (K),\,(H)\in\Phi(G).
\end{equation}
Moreover, we define for $n=0,1,2,\dots$ the following subsets $\Phi_{n}(G)$ of
$\Phi(G)$
\[
\Phi_{n}(G):=\{ (H)\in\Phi(G): \dim W(H)=n\}.
\]
\vskip.3cm \noi Let  
$U(G)={\mathbb{Z}}[\Phi(G)]$ be the free ${\mathbb{Z}}$-module generated by $\Phi(G)$, then an element $a\in U(G)$ is represented as
\begin{equation}
\label{eq:sum-a}a=\sum_{(L)\in\Phi(G)} n_{L} \, (L), \quad n_{L}\in
{\mathbb{Z}},
\end{equation}
where the integers $n_{L}=0$ except for a finite number of elements
$(L)\in\Phi(G)$. For such element $a\in U(G)$ and $(H)\in\Phi(G)$, we will
also use the notation
\begin{equation}
\label{eq:coeff-H}\text{coeff}^{H}(a)=n_{H},
\end{equation}
i.e. $n_{H}$ is the coefficient in \eqref{eq:sum-a} standing by $(H)$. 
\vskip.3cm

\begin{definition}
\textrm{\label{def:EulerRing} (cf. \cite{tD}) Define the
\textit{multiplication} on $U(G)$ as follows: for generators $(H)$,
$(K)\in\Phi(G)$ put:
\begin{equation}
(H)\ast(K)=\sum_{(L)\in\Phi(G)}n_{L}(L),\label{eq:Euler-mult}%
\end{equation}
where
\begin{equation}
n_{L}:=\chi_{c}((G/H\times G/K)_{L}/N(L)),\label{eq:Euler-coeff}%
\end{equation}
Note,  $(G/H\times G/K)_{L}$ denotes the set of elements in $G/H\times G/K$ that are
fixed exactly by $L$. $\chi_c(\cdot)$ denotes the Euler Characteristic (For its precise definition, see Section 3 of \cite{DK} and \cite{Spa}).} Moreover, the multiplication is extended linearly to the entire Euler ring 
$U(G)$. Then the free ${\mathbb{Z}}$-module $U(G)$ associated with
multiplication (\ref{eq:Euler-mult}) is called the \textit{Euler ring} of $G$.
\end{definition}

\vskip.3cm It is easy to notice that $(G)$ is the unit element in $U(G)$, i.e.
$(G)*a=a$ for all $a\in U(G)$. \vskip.3cm

\begin{lemma}
\label{lem:inv-1} Assume that $a\in U(G)$ is an invertible element and
$(H)\in\Phi(G)$. Then
\[
\text{\text{\rm coeff}}^{H}((H)*a)\not =0.
\]

\end{lemma}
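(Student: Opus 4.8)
\noindent The plan is to exploit the \emph{triangularity} of the Euler ring multiplication with respect to the order \eqref{eq:order}, together with the fact that $(G)$ is the unit of $U(G)$.

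First I would record the triangularity statement: for any $(K),(L)\in\Phi(G)$, the product $(K)\ast(L)$ is a $\mathbb Z$-combination of classes $(M)$ with $(M)\le(K)$ \emph{and} $(M)\le(L)$. Indeed, by \eqref{eq:Euler-coeff} the coefficient of $(M)$ in $(K)\ast(L)$ equals $\chi_c\big((G/K\times G/L)_M/N(M)\big)$, which can be nonzero only if $(G/K\times G/L)_M\neq\emptyset$; and a point $(g_1K,g_2L)$ with isotropy group $M$ satisfies $M\subseteq g_1Kg_1^{-1}\cap g_2Lg_2^{-1}$, whence $(M)\le(K)$ and $(M)\le(L)$. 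Applying this with the fixed class $(H)$, for every $b\in U(G)$ the element $(H)\ast b$ is supported on classes $\le(H)$; in particular I can write
\[
(H)\ast a=\alpha\,(H)+r,\qquad \alpha:=\text{coeff}^{H}\big((H)\ast a\big),
\]
where $r$ is a $\mathbb Z$-combination of classes $(L)$ strictly below $(H)$.

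Next I would bring in invertibility: choose $b\in U(G)$ with $a\ast b=(G)$. Using associativity of $\ast$ and $(G)\ast x=x$,
\[
(H)=(H)\ast(a\ast b)=\big((H)\ast a\big)\ast b=\alpha\,\big((H)\ast b\big)+r\ast b.
\]
By the triangularity just noted, every generator occurring in $r\ast b$ is $<(H)$, hence contributes $0$ to $\text{coeff}^{H}$. Reading off the coefficient of $(H)$ on both sides yields $1=\alpha\cdot\text{coeff}^{H}\big((H)\ast b\big)$, so $\alpha$ is a unit in $\mathbb Z$; in particular $\alpha=\text{coeff}^{H}\big((H)\ast a\big)\neq0$, which is the assertion.

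The only structural ingredient is the triangularity of $\ast$, and that is immediate from the fixed-point description \eqref{eq:Euler-coeff} of the structure constants; the rest is bookkeeping with the partial order on $\Phi(G)$ and with the unit $(G)$. I do not expect a genuine obstacle; the one point worth keeping in mind is that $\Phi(G)$ may be infinite, but since every element of $U(G)$ is a \emph{finite} combination of generators, all the sums above are finite and the argument goes through verbatim.
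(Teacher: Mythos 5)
Your proposal is correct and takes essentially the same approach as the paper: both arguments rest on the triangularity of the Euler-ring multiplication with respect to the partial order on $\Phi(G)$ (so that $(H)\ast x$ is supported on classes $\le(H)$, and applying this twice forces $(H)$ to survive), combined with the existence of $a^{-1}$. The only difference is presentational: the paper argues by contradiction, while you run the computation directly, which in fact yields the slightly stronger statement that $\text{coeff}^{H}\big((H)\ast a\big)$ is a unit in $\mathbb Z$, i.e.\ equals $\pm 1$.
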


\begin{proof}
Suppose that
\[
a=\sum_{(L)\in\Phi(a)} n_{L} \, (L).
\]
Then
\[
(H)*a=\sum_{(K)\in\Phi((H)*a)} m_{K}\, (K), \text{ and formula
(\ref{eq:Euler-mult}) implies that} \;\; (H)\ge(K).
\]
Assume for contradiction that $(H)>(K)$ for all $(K)\in\Phi((H)*a)$. Then, by
exactly the same argument we have
\[
(H)*a*a^{-1}=\sum_{(L)\in\Phi((H)*a*a^{-1})} n_{L} \, (L), \quad\text{ where }
(H)>(L),
\]
which is a contradiction with the fact that
\[
(H)*a*a^{-1}=(H)*(G)=(H).
\]
\end{proof}


\vskip.3cm Take $\Phi_{0}(G)=\{(H)\in\Phi(G):$ \textrm{dim\thinspace
}$W(H)=0\}$ and denote by $A(G)={\mathbb{Z}}[\Phi_{0}(G)]$ a free
${\mathbb{Z}}$-module with basis $\Phi_{0}(G).$ Define multiplication on
$A(G)$ by restricting multiplication from $U(G)$ to $A(G)$, i.e. for $(H)$,
$(K)\in\Phi_{0}(G)$ let
\begin{equation}
(H)\cdot(K)=\sum_{(L)}n_{L}(L),\qquad(H),(K),(L)\in\Phi_{0}(G),\text{ where}
\label{eq:multBurnside}%
\end{equation}
\begin{equation}
n_{L}=\chi((G/H\times G/K)_{L}/N(L))=|(G/H\times G/K)_{L}/N(L)|
\label{eq:CoeffBurnside}%
\end{equation}
($\chi$ here denotes the usual Euler characteristic). Then $A(G)$ with
multiplication \eqref{eq:multBurnside} becomes a ring which is called the
\textit{Burnside ring} of $G$. As it can be shown, the coefficients
\eqref{eq:CoeffBurnside} can be found using the following
recursive formula:
\begin{equation}
n_{L}=\frac{n(L,K)|W(K)|n(L,H)|W(H)|-\sum_{(\widetilde{L})>(L)}n(L,\widetilde
{L})n_{\widetilde{L}}|W(\widetilde{L})|}{|W(L)|}, \label{eq:rec-coef}%
\end{equation}
where $(H),$ $(K),$ $(L)$ and $(\widetilde{L})$ are taken from $\Phi_{0}
(G),$ and \begin{align*}
&
N_{G}(L,H)=\Big\{
g\in G:gLg^{-1} \subset H\Big\} , \\
&N_{G}(L,H)/H=\Big\{
Hg: g\in N_{G}(L,H)\Big\}\\
& n(L,H)=\Big|\frac{N(L,H)}{N(H)}\Big|
\end{align*}
\vskip.3cm Observe that although $A(G)$ is clearly a ${\mathbb{Z}}$-submodule
of $U(G)$, in general, it is \textbf{not} a subring of $U(G)$.

\vskip.3cm Define $\pi_{0}:U(G)\rightarrow A(G)$ as follows: for $(H)\in
\Phi(G)$ let
\begin{equation}
\pi_{0}((H))=
\begin{cases}
(H) & \text{ if }\;(H)\in\Phi_{0}(G),\\
0 & \text{ otherwise.}%
\end{cases}
\label{eq:pi_0-homomorphism}%
\end{equation}
The map $\pi_{0}$ defined by (\ref{eq:pi_0-homomorphism}) is a ring
homomorphism {(cf. \cite{BKR})}, i.e.
\[
\pi_{0}((H)\ast(K))=\pi_{0}((H))\cdot\pi_{0}((K)),\qquad(H),(K)\in\Phi(G).
\]
The following well-known result (cf. \cite{tD}, Proposition 1.14, page 231)
shows a difference between the generators $(H)$ of $U(G)$ and $A(G)$.

\begin{proposition}
\label{pro:nilp-elements} Let $(H)\in\Phi_{n}(G)$.

\begin{itemize}
\item[(i)] If $n>0$, then $(H)^{k}=0$ in $U(G)$ for some $k\in\mathbb{N}$,
i.e. $(H)$ is a nilpotent element in $U(G)$;

\item[(ii)] If $n=0$, then $(H)^{k}\not =0$ for all $k\in{\mathbb{N}}$.
\end{itemize}
\end{proposition}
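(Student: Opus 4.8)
The entire statement follows from a single coefficient computation, which I would isolate first: \emph{for every $(H)\in\Phi(G)$ the coefficient of $(H)$ itself in the $k$-th power $(H)^{k}=(H)\ast\cdots\ast(H)$ ($k$ factors) in $U(G)$ equals $\chi\!\big(W(H)\big)^{k-1}$}. Both items then fall out of the dichotomy $\chi(W(H))=|W(H)|\ge 1$ when $\dim W(H)=0$, versus $\chi(W(H))=0$ when $\dim W(H)>0$.

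To prove the computation: by \eqref{eq:Euler-coeff} and associativity of $\ast$, the coefficient of a class $(L)$ in $(H)^{k}$ is $\chi_{c}\big(((G/H)^{k})_{L}/N(L)\big)$, i.e. the Euler characteristic with compact support of the stratum of $(G/H)^{k}$ with isotropy exactly $L$, divided by $N(L)$. The isotropy of $(g_{1}H,\dots,g_{k}H)$ is $\bigcap_{i}g_{i}Hg_{i}^{-1}$; since $G$ is a compact Lie group an inclusion $H\subseteq gHg^{-1}$ forces $H=gHg^{-1}$, so this intersection equals $H$ exactly when every $g_{i}$ lies in $N(H)$. Hence $((G/H)^{k})_{H}\cong W(H)^{k}$, with $N(H)$ acting through the diagonal left-translation action of $W(H)=N(H)/H$; the quotient is homeomorphic to $W(H)^{k-1}$, so the $(H)$-coefficient of $(H)^{k}$ is $\chi_{c}(W(H)^{k-1})=\chi(W(H))^{k-1}$ (for $k=1$ this reads $1$, as it must).

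Item (ii) is now immediate: if $\dim W(H)=0$ then $W(H)$ is a nonempty finite group, so $\chi(W(H))=|W(H)|\ge 1$, and the $(H)$-coefficient of $(H)^{k}$ is $|W(H)|^{k-1}\ge 1$; in particular $(H)^{k}\neq 0$ for all $k$. (Equivalently one may push $(H)$ forward along the ring homomorphism $\pi_{0}\colon U(G)\to A(G)$ of \eqref{eq:pi_0-homomorphism}, which is legitimate since $(H)\in\Phi_{0}(G)$, and then invoke the standard mark homomorphism on the Burnside ring; but the computation above already suffices.)

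For item (i), suppose $\dim W(H)>0$. Then $W(H)$ is a positive-dimensional compact Lie group, hence carries a nowhere-zero left-invariant vector field, so $\chi(W(H))=0$; thus the $(H)$-coefficient of $(H)^{k}$ vanishes for all $k\ge 2$, i.e. $(H)^{2}$ is supported on conjugacy classes strictly below $(H)$. It remains to kill those lower terms, and the plan is a downward induction over the set $\mathcal S$ of conjugacy classes of isotropy subgroups occurring in the $G$-spaces $(G/H)^{k}$, $k\ge 1$: this $\mathcal S$ is finite (each such subgroup is an intersection of boundedly many conjugates of $H$, by a Noetherian argument on dimensions and component groups, so $\mathcal S$ equals the finite orbit-type set of a single compact manifold $(G/H)^{N}$), it contains the support of every $(H)^{k}$, and re-applying the leading-coefficient computation at each class $(L)\in\mathcal S$ shows that in $(H)^{2m}$ the coefficient of any maximal class of the current support is eventually $0$; since $\mathcal S$ is finite this forces $(H)^{k}=0$ for large $k$. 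The main obstacle is the input needed to run the induction, namely that \emph{every $(L)\in\mathcal S$ again satisfies $\dim W(L)>0$}: I would reduce this to the identity $\dim W_{G}(L)=\dim Z_{G}(L)-\dim Z(L)$ (one inequality from $Z_{G}(L)\cdot L\subseteq N_{G}(L)$, the other from $N_{G}(L)/Z_{G}(L)\hookrightarrow\operatorname{Aut}(L)$ whose identity component is $\operatorname{Inn}(L)=L/Z(L)$), then use that each $(L)\in\mathcal S$ is sub-conjugate to $H$, so $Z_{G}(L)\supseteq Z_{G}(H)$, while the hypothesis gives $\dim Z_{G}(H)>\dim Z(H)$; the remaining bookkeeping — bounding $\dim Z(L)$ against $\dim Z_{G}(L)$, with the maximal-rank and abelian cases treated separately, via the fact that any torus of $L$ sits in a maximal torus of $G$ whose centralizer has rank $\operatorname{rk} G$ — is the routine-but-technical part. (Alternatively, the whole Proposition can be quoted from the classical description of the Euler ring, cf.\ \cite{tD}.)
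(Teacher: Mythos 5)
Your coefficient computation is correct: via \eqref{eq:Euler-coeff} applied to the product $G$-space $(G/H)^{k}$, the $(H)$-coefficient of $(H)^{k}$ is $\chi_{c}\bigl(W(H)^{k}/W(H)\bigr)=\chi\bigl(W(H)\bigr)^{k-1}$, and this settles part~(ii) in full, since $\dim W(H)=0$ forces this to equal $|W(H)|^{k-1}\ge 1\ne 0$. For comparison, the paper itself gives no proof: Proposition~\ref{pro:nilp-elements} is stated with a citation to \cite{tD} (a partial sketch exists only in a commented-out block of the source), so you are working from scratch here.

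Part~(i) is incomplete. Vanishing of the $(H)$-coefficient for $k\ge 2$ only pushes the support strictly below $(H)$, and the descent you propose rests on two unproved inputs. (a)~\emph{Finiteness of $\mathcal S:=\bigcup_{k}\operatorname{supp}(H)^{k}$.} The DCC argument on $\bigl(\dim L,\,|L/L^{0}|\bigr)$ shows descending chains in $\mathcal S$ terminate, but a bounded DCC poset need not be finite, and the assertion that $\mathcal S=\Phi\bigl((G/H)^{N};G\bigr)$ for some fixed $N$ is precisely the nontrivial lemma one must establish; your sketch does not do so, and this is the real gap. (b)~\emph{Positivity of $\dim W(L)$ for each $(L)\in\mathcal S$.} This is true, but the route you outline (the identity $\dim W_{G}(L)=\dim Z_{G}(L)-\dim Z(L)$ followed by a rank/abelian case split) is circuitous and left unfinished. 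A shorter argument: for $L\le H$ up to conjugacy one has $Z_{G}(L)\supseteq Z_{G}(H)\cdot Z(L)$ while $Z_{G}(H)\cap Z(L)\subseteq Z_{G}(H)\cap L=Z_{L}(H)\subseteq Z(H)$, so $\dim Z_{G}(L)-\dim Z(L)\ge\dim Z_{G}(H)-\dim Z(H)$, i.e.\ $\dim W_{G}(L)\ge\dim W_{G}(H)>0$. In fact (b) can be sidestepped entirely, as the source's commented-out sketch does: multiplying by $(H)$ rather than squaring, the $(L)$-coefficient of $(H)^{m+1}=(H)^{m}\ast(H)$ for a maximal $(L)\in\operatorname{supp}(H)^{m}$ equals $\alpha_{L}\,\chi\bigl((G/H)^{L}\bigr)$, and $\chi\bigl((G/H)^{L}\bigr)=0$ because the free right $W(H)$-action on $G/H$ preserves $(G/H)^{L}$ and $\dim W(H)>0$ provides a freely acting circle. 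That route uses only the hypothesis on $H$, but it still needs (a) to terminate the induction, so (a) remains the missing ingredient.
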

\begin{corollary}
    If $\alpha=n_1(L_1)+n_2(L_2)+\cdots +n_k(L_k),$ where $\dim W(L_j)\ge 0,$ then there exists $n\in \mathbb N$ s.t. $\alpha^n=0.$
\end{corollary}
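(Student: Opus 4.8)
The statement is an immediate packaging of Proposition \ref{pro:nilp-elements} together with the multiplicative structure of the Euler ring $U(G)$. The plan is to reduce the general element $\alpha = n_1(L_1) + \cdots + n_k(L_k)$ to its individual generators and exploit nilpotency of each one separately, then control the cross terms by an ordering argument. First I would observe that, under the hypothesis that every $(L_j)$ has $\dim W(L_j) > 0$ (which is what must be intended; the statement $\dim W(L_j)\ge 0$ should read $> 0$, since $n=0$ generators are precisely the non-nilpotent ones by part (ii) of Proposition \ref{pro:nilp-elements}), each generator $(L_j)$ is nilpotent in $U(G)$: there is $m_j \in \mathbb{N}$ with $(L_j)^{m_j} = 0$. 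The goal is to produce a single exponent $n$ that annihilates the whole sum.

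The key step is to expand $\alpha^n$ by the multinomial formula in the commutative ring $U(G)$:
\[
\alpha^n = \sum_{\substack{a_1+\cdots+a_k = n \\ a_i \ge 0}} \binom{n}{a_1,\dots,a_k}\, n_1^{a_1}\cdots n_k^{a_k}\, (L_1)^{a_1}\ast\cdots\ast(L_k)^{a_k}.
\]
Set $n := m_1 + m_2 + \cdots + m_k - (k-1)$ (or simply $n := m_1+\cdots+m_k$, which is even safer). Then in every term of the sum, the constraint $a_1+\cdots+a_k = n$ forces at least one index $i$ with $a_i \ge m_i$ — otherwise $a_1+\cdots+a_k \le (m_1-1)+\cdots+(m_k-1) = n - 1 < n$, a contradiction. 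For that index, the factor $(L_i)^{a_i}$ contains $(L_i)^{m_i} = 0$ as a sub-product, and since $U(G)$ is a commutative ring, the entire term vanishes. Hence $\alpha^n = 0$.

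I would also remark on why the coefficients $n_j$ and the cross terms cause no trouble: $U(G)$ being a ring means products of the form $(L_i)\ast(L_j)$ are again finite $\mathbb{Z}$-combinations of generators, but we never need to know \emph{what} those combinations are — the only fact used is that a zero factor kills the product, i.e. $0 \ast b = 0$ for any $b \in U(G)$. I do not expect any genuine obstacle here; the one point requiring care is simply invoking Proposition \ref{pro:nilp-elements}(i) correctly, namely confirming the hypothesis really is $\dim W(L_j) > 0$ for all $j$ (a generator with zero-dimensional Weyl group is not nilpotent, so the statement is false as literally written with ``$\ge 0$''), and then the combinatorial pigeonhole bound on the exponent. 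Everything else is a one-line multinomial expansion in a commutative ring.
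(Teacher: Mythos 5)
Your proof is correct and uses essentially the same ingredients as the paper's: nilpotency of each generator $(L_j)$ from Proposition~\ref{pro:nilp-elements}(i), expansion of $\alpha^n$ in the commutative ring $U(G)$, and a pigeonhole bound on the exponent guaranteeing that every term contains a vanishing factor. The only difference is bookkeeping: the paper runs an induction on $k$ with a binomial expansion of $\big(\alpha + n_{k+1}(L_{k+1})\big)^m$ at each inductive step, while you collapse this into a single multinomial expansion --- the two are equivalent, and yours is arguably the cleaner packaging. You are also right to flag the typo in the hypothesis: it must read $\dim W(L_j)>0$ rather than $\ge 0$, since a generator with zero-dimensional Weyl group is not nilpotent by Proposition~\ref{pro:nilp-elements}(ii), and the paper's own proof implicitly assumes the strict inequality by invoking part (i) of that proposition.
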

\begin{proof}
    By induction w.r.t. $k\in \mathbb N$, clearly for $k=1,$ it is exactly the statement of proposition\eqref{pro:nilp-elements}. Suppose that the statement is true for $k\geq 1,$ and will show that it is also true for $k+1.$ Indeed, we have
    \[
\alpha^1=n_1(L_1)+n_2(L_2)+\cdots n_k(L_k)+n_{k+1}(L_{k+1})
    =\alpha+n_{k+1}(L_{k+1}),
    \]
so \[
\alpha^m=\sum_{l=0}^{m}C_m^l \alpha^l n_k^{m-l}(L_{k+1})^{m-l}.
\]
Let $k$ be given by Proposition \eqref{pro:nilp-elements}, for $L:=l_{k+1},$ then for $m\ge n+k,$ one has 
\[
\alpha^l (L)^{m-l}=0
\]
    \end{proof}
\vskip.3cm 
\black
\section{Properties of $G$-Equivariant Gradient Degree}

\label{app:D}

In what follows, we assume that $G$ is a compact Lie group.

\subsection{Brouwer $G$-Equivariant Degree}

Assume that $V$ is an orthogonal $G$-representation and $\Omega\subset V$ an
open bounded $G$-invariant set. A $G$-equivariant (continuous) map $f:V\to V$
is called \textit{$\Omega$-admissible} if $f(x)\neq0$ for any $x\in
\partial\Omega$; in such a case, the pair $(f,\Omega)$ is called
\textit{$G$-admissible}. Denote by $\mathcal{M}^{G}(V,V)$ the set of all such
admissible $G$-pairs, and put $\mathcal{M}^{G}:=\bigcup_{V}\mathcal{M}
^{G}(V,V)$, where the union is taken for all orthogonal $G$-representations
$V$. We have the following result:

\begin{definition}
\label{thm:GpropDeg} There exists a unique map $G$-$\deg:\mathcal{M}^{G}\to
A(G)$, which assigns to every admissible $G$-pair $(f,\Omega)$ an element
$G\mbox{\rm -}\deg(f,\Omega)\in A(G)$, called the \textit{$G$-equivariant
degree} (or simply \textit{$G$-degree}) of $f$ on $\Omega$:
\begin{equation}
\label{eq:G-deg0}G\mbox{\rm -}\deg(f,\Omega)=\sum_{(H_{i})\in\Phi_{0}
(G)}n_{H_{i}}(H_{i})= n_{H_{1}}(H_{1})+\dots+n_{H_{m}}(H_{m}).
\end{equation}
It satisfies the properties of additivity, homotopy, normalization, as well as existence, product, suspension, recurrence formula, etc. (see \cite{balanov2025degree} for details).
We call $G\mbox{\rm -}\deg(f,\Omega)$ the \textit{$G$-equivariant
degree} (or simply \textit{$G$-degree}) of $f$ on $\Omega$.
\end{definition}\vs 
\begin{definition} The Brouwer $G$-equivariant degree
\begin{equation} \label{eq:basicdeg}
\deg_{\cV_i} := G\text{-}\deg(-\id, B(\cV_i)),
\end{equation}
is called the {\it $\cV_i$-basic degree} (or simply {\it basic degree}), and it can be computed by: $\deg_{\mathcal{V} _{i}}=\sum_{(K)}n_{K}(K),$
where
\begin{align}\label{eq:formula}
n_{K}=\frac{(-1)^{\dim\mathcal{V} _{i}^{K}}- \sum_{K<L}{n_{L}\, n(K,L)\, \left|  W(L)\right|  }}{\left|  W(K)\right|  }.
\end{align}
\end{definition}

\begin{lemma}\label{le:basic_coefficient}
If for $ (K_o) \in \Phi_0(G)$, one has $\text{coeff}^{L_o} (\deg_{\cV_i})$ is a leading coefficient of $\deg_{\cV_i}$, then $\dim (\cV_i ^{K_o})$ is odd and
\[
\text{coeff}^{K_o}(\deg_{\cV_i})=
\begin{cases}\label{eq:a0}
-1 & \text{if} \; |W(K_o)|=2,\\
-2 & \text{if}\; |W(K_o)|=1;\\
\end{cases}
\]
\end{lemma}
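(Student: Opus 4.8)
The plan is to read everything off the recurrence formula \eqref{eq:formula} once one identifies which summands there survive. Write $n_H:=\text{coeff}^H(\deg_{\cV_i})$, $Q:=W(K_o)$, and $d:=\dim\cV_i^{K_o}$; since $(K_o)\in\Phi_0(G)$ the Weyl group $Q$ is finite, and $n_{K_o}\in\mathbb Z$ because $\deg_{\cV_i}\in A(G)=\mathbb Z[\Phi_0(G)]$. As $\cV_i$ is irreducible and the statement is vacuous when $\cV_i$ is the trivial representation, we may assume $\cV_i^G=\{0\}$, so that $n_G=(-1)^{\dim\cV_i^G}=1$. By a leading coefficient I mean here: $(K_o)\neq(G)$, $n_{K_o}\neq 0$, and $(K_o)$ is maximal among the $(H)\in\Phi_0(G)$ with $(H)\neq(G)$ and $n_H\neq 0$.

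First I would show that the sum $\sum_{(K_o)<(L)}n_L\,n(K_o,L)\,|W(L)|$ in \eqref{eq:formula} reduces to a single term. Any $(L)\in\Phi_0(G)$ with $(L)>(K_o)$ satisfies $(L)\leq(G)$; if $(K_o)<(L)<(G)$ then $n_L=0$ by maximality of $(K_o)$ in the support of $\deg_{\cV_i}$, so the only contribution is from $(L)=(G)$. For that term $N_G(K_o,G)=G=N_G(G)$ gives $n(K_o,G)=1$, while $|W(G)|=1$ and $n_G=1$; hence \eqref{eq:formula} collapses to
\[
n_{K_o}\,|Q|=(-1)^{d}-1 .
\]

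The conclusion is then immediate. If $d$ were even the right-hand side would vanish, forcing $n_{K_o}=0$ and contradicting that $(K_o)$ carries a leading coefficient; hence $d=\dim\cV_i^{K_o}$ is odd and $n_{K_o}\,|Q|=-2$. Since $n_{K_o}\in\mathbb Z$, $|Q|$ divides $2$, so $|W(K_o)|\in\{1,2\}$, and $n_{K_o}=-2/|W(K_o)|$ gives $n_{K_o}=-2$ for $|W(K_o)|=1$ and $n_{K_o}=-1$ for $|W(K_o)|=2$, which is exactly the claimed dichotomy.

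I do not anticipate a genuine obstacle: this is a short deduction from \eqref{eq:formula}, and the only points needing care are that $(G)$ is the unique orbit type strictly above $(K_o)$ contributing to the recurrence — which is precisely maximality of $(K_o)$ in the support together with $\deg_{\cV_i}\in A(G)$ — and the elementary identities $n(K_o,G)=|W(G)|=n_G=1$. A more conceptual explanation of ``$d$ odd and $|W(K_o)|\leq 2$'' is that maximality of $(K_o)$ forces $W(K_o)$ to act freely on the unit sphere $S(\cV_i^{K_o})$: a nontrivial element $gK_o\in W(K_o)$ fixing a unit vector would produce a subgroup strictly larger than $K_o$ still having a nonzero fixed vector in $\cV_i$, contradicting maximality; then $\chi(S(\cV_i^{K_o}))/|W(K_o)|=(1+(-1)^{d-1})/|W(K_o)|$ must be an integer. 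The recurrence argument, however, is self-contained, and that is the route I would write up.
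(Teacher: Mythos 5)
The paper states Lemma~\ref{le:basic_coefficient} without proof, so there is no in-paper argument to compare against; your recurrence-based derivation from \eqref{eq:formula} is correct and is the natural route (you also correctly read the typo $L_o$ as $K_o$). The key steps all check out: by maximality of $(K_o)$ in the support, the sum in \eqref{eq:formula} collapses to the $(G)$-term, and $n_G = n(K_o,G) = |W(G)| = 1$ for nontrivial irreducible $\cV_i$, giving $n_{K_o}|W(K_o)| = (-1)^d - 1$, whence $d$ odd and $|W(K_o)|$ divides $2$.

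One small imprecision in your closing ``conceptual'' aside: a nontrivial $gK_o\in W(K_o)$ fixing some $x\in S(\cV_i^{K_o})$ yields an isotropy $(G_x)>(K_o)$ with $x\neq 0$, but $(G_x)$ need not lie in $\Phi_0(G)$, hence need not carry a nonzero coefficient in $A(G)$, so ``$(K_o)$ leading in the support'' does not by itself force $W(K_o)$ to act freely on $S(\cV_i^{K_o})$. That argument needs the stronger hypothesis that $(K_o)$ is maximal among all orbit types of $\cV_i$ (which is, as it happens, how the lemma is invoked in the proof of Theorem~\ref{thm:main}). Since you explicitly flag the recurrence argument as the self-contained one, this does not affect the validity of your proof.
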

\begin{lemma}\label{le:involutive}
    For each  $\mathcal V_i$,  the corresponding basic degree $\deg_{\mathcal V_i} \in A(G)$ is an involution in the Burnside ring. It satisfies
    \[
    (\deg_{\cV_i})^2=\deg_{\cV_i} \cdot \deg_{\cV_i}=(G).
    \]\vs
\end{lemma}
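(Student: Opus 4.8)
The plan is to obtain the identity $(\deg_{\mathcal{V}_i})^2 = (G)$ directly from the axiomatic properties of the Brouwer $G$-equivariant degree recalled in Definition \ref{thm:GpropDeg}, namely the product, homotopy, and normalization properties. First I would rewrite the square using the product property. Since $\deg_{\mathcal{V}_i} = G\text{-}\deg(-\id_{\mathcal{V}_i}, B(\mathcal{V}_i))$,
\[
(\deg_{\mathcal{V}_i})^2 = G\text{-}\deg(-\id_{\mathcal{V}_i}, B(\mathcal{V}_i)) \cdot G\text{-}\deg(-\id_{\mathcal{V}_i}, B(\mathcal{V}_i)) = G\text{-}\deg\big(-\id_{\mathcal{V}_i}\times(-\id_{\mathcal{V}_i}),\, B(\mathcal{V}_i)\times B(\mathcal{V}_i)\big),
\]
where the dot denotes the Burnside-ring multiplication \eqref{eq:multBurnside}. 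Now $-\id_{\mathcal{V}_i}\times(-\id_{\mathcal{V}_i})$ is precisely $-\id$ on the doubled representation $\mathcal{V}_i\oplus\mathcal{V}_i$ (carrying the diagonal $G$-action), and $B(\mathcal{V}_i)\times B(\mathcal{V}_i)$ is a bounded open $G$-invariant neighborhood of the origin on which $-\id$ is admissible; an excision therefore reduces the task to evaluating $G\text{-}\deg(-\id, B(\mathcal{V}_i\oplus\mathcal{V}_i))$.

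The decisive step is to exhibit a $G$-equivariant admissible homotopy on a doubled representation joining $-\id$ to $\id$. For $\theta\in[0,\pi]$ set
\[
A_\theta := \begin{pmatrix} \cos\theta\,\id_{\mathcal{V}_i} & -\sin\theta\,\id_{\mathcal{V}_i} \\ \sin\theta\,\id_{\mathcal{V}_i} & \cos\theta\,\id_{\mathcal{V}_i} \end{pmatrix} : \mathcal{V}_i\oplus\mathcal{V}_i \to \mathcal{V}_i\oplus\mathcal{V}_i.
\]
Because every block is a scalar multiple of $\id_{\mathcal{V}_i}$, the operator $A_\theta$ commutes with the diagonal $G$-action, so it is $G$-equivariant for all $\theta$; moreover $A_\theta$ is orthogonal, hence $A_\theta x\neq 0$ whenever $x\neq 0$, so $\{A_\theta\}_{\theta\in[0,\pi]}$ is an admissible $G$-homotopy on the ball $B(\mathcal{V}_i\oplus\mathcal{V}_i)$. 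Since $A_0 = \id$ and $A_\pi = -\id$, the homotopy property yields $G\text{-}\deg(-\id, B(\mathcal{V}_i\oplus\mathcal{V}_i)) = G\text{-}\deg(\id, B(\mathcal{V}_i\oplus\mathcal{V}_i))$, and the normalization property identifies the latter with the unit $(G)\in A(G)$. Concatenating the equalities gives $(\deg_{\mathcal{V}_i})^2 = (G)$, which is exactly the assertion that $\deg_{\mathcal{V}_i}$ is an involution in the Burnside ring.

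The only points that require genuine care are formal rather than computational: one should check that the multiplication appearing in the product formula for the Brouwer $G$-degree is indeed the Burnside-ring product \eqref{eq:multBurnside} occurring in the statement, and that the excision passing from the box $B(\mathcal{V}_i)\times B(\mathcal{V}_i)$ to the round ball $B(\mathcal{V}_i\oplus\mathcal{V}_i)$ is justified. Both follow at once from the standard axioms cited in Definition \ref{thm:GpropDeg}, but they are the steps worth spelling out; the rotational homotopy itself is elementary, and neither the recurrence formula \eqref{eq:formula} nor Lemma \ref{le:basic_coefficient} is needed.
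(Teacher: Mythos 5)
Your argument is correct. The paper states Lemma~\ref{le:involutive} without proof (it is treated as a known fact from the theory of the Brouwer $G$-equivariant degree, and it is what underlies Proposition~\ref{prop:invertible}), so there is no in-paper proof to compare against; but the route you take is exactly the classical one: pass to the doubled representation via the product property, identify $(-\id)\times(-\id)$ with $-\id$ on $\mathcal V_i\oplus\mathcal V_i$, and use the block-rotation $A_\theta$ to produce an admissible $G$-equivariant homotopy from $-\id$ to $\id$, whence normalization gives $(G)$. The two points you flag yourself are indeed the only ones that need to be said explicitly: the product in the product axiom is by definition the Burnside-ring multiplication, so no separate identification is required, and the passage from $B(\mathcal V_i)\times B(\mathcal V_i)$ to the round ball $B(\mathcal V_i\oplus\mathcal V_i)$ follows from additivity/excision since $0$ is the unique zero of $-\id$ and both sets are bounded open $G$-invariant neighborhoods of $0$. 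No appeal to the recurrence formula or to Lemma~\ref{le:basic_coefficient} is needed, as you note.
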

\vskip.3cm


\subsection{$G$-Equivariant Gradient Degree}

Let $V$ be an orthogonal $G$-representation. Denote by $C^{2}_{G}
(V,\mathbb{R})$ the space of $G$-invariant real $C^{2}$-functions on $V$. Let
$\varphi\in C^{2}_{G}(V,\mathbb{R})$ and $\Omega\subset V$ be an open bounded
invariant set such that $\nabla\varphi(x)\not =0$ for $x\in\partial\Omega$. In
such a case, the pair $(\nabla\varphi,\Omega)$ is called \textit{$G$-gradient
$\Omega$-admissible}. Denote by $\mathcal{M}^{G}_{\nabla}(V,V)$ the set of all
$G $-gradient $\Omega$-admissible pairs in $\mathcal{M}^{G}(V,V)$ and put
$\mathcal{M}^{G}_{\nabla}:=\bigcup_{V}\mathcal{M}^{G}_{\nabla}(V,V)$. 

\begin{theorem}
\label{thm:Ggrad-properties} There exists a unique map
$\nabla_{G}\mbox{\rm -}\deg:\mathcal{M}_{\nabla}^{G}\to U(G)$, which assigns
to every $(\nabla\varphi,\Omega)\in\mathcal{M}^{G}_{\nabla}$ an element
$\nabla_{G}\mbox{\rm -}\deg(\nabla\varphi,\Omega)\in U(G)$, called the
\textit{$G$-gradient degree} of $\nabla\varphi$ on $\Omega$,
\begin{equation}
\label{eq:grad-deg}\nabla_{G}\mbox{\rm -}\deg(\nabla\varphi,\Omega)=
\sum_{(H_{i})\in\Phi(\Gamma)}n_{H_{i}}(H_{i})= n_{H_{1}}(H_{1})+\dots
+n_{H_{m}}(H_{m}),
\end{equation}
satisfying the following properties:

\begin{enumerate}

\item \textrm{\textbf{(Existence)}} If $\nabla_{G}\mbox{\rm -}\deg
(\nabla\varphi,\Omega)\not =0$, i.e., \eqref{eq:grad-deg} contains a non-zero
coefficient $n_{H_{i}}$, then $\exists_{x\in\Omega}$ such that $\nabla
\varphi(x)=0$ and $(G_{x})\geq(H_{i})$.

\item \textrm{\textbf{(Additivity)}} Let $\Omega_{1}$ and $\Omega_{2}$ be two
disjoint open $G$-invariant subsets of $\Omega$ such that $(\nabla
\varphi)^{-1}(0)\cap\Omega\subset\Omega_{1}\cup\Omega_{2}$. Then,
\[
\nabla_{G}\mbox{\rm -}\deg(\nabla\varphi,\Omega)= \nabla_{G}\mbox{\rm -}\deg
(\nabla\varphi,\Omega_{1})+ \nabla_{G}\mbox{\rm -}\deg(\nabla\varphi
,\Omega_{2}).
\]

\item \textrm{\textbf{(Homotopy)}} If $\nabla_{v}\Psi:[0,1]\times V\to V$ is a
$G$-gradient $\Omega$-admissible homotopy, then
\[
\nabla_{G}\mbox{\rm -}\deg(\nabla_{v}\Psi(t,\cdot),\Omega)=\mathrm{constant}.
\]

\item \textrm{\textbf{(Normalization)}} Let $\varphi\in C^{2}_{G}
(V,\mathbb{R})$ be a special $\Omega$-Morse function such that $(\nabla
\varphi)^{-1}(0)\cap\Omega=G(v_{0})$ and $G_{v_{0}}=H$. Then,
\[
\nabla_{G}\mbox{\rm -}\deg(\nabla\varphi,\Omega)= (-1)^{{m}^{-}(\nabla
^{2}\varphi(v_{0}))}\cdot(H),
\]
where ``$m^{-}(\cdot)$'' stands for the total dimension of eigenspaces for
negative eigenvalues of a (symmetric) matrix.

\item \textrm{\textbf{(Product)}} For all $(\nabla\varphi_{1},\Omega
_{1}),(\nabla\varphi_{2},\Omega_{2})\in\mathcal{M}^{G}_{\nabla}$,
\[
\nabla_{G}\mbox{\rm -}\deg(\nabla\varphi_{1}\times\nabla\varphi_{2},
\Omega_{1}\times\Omega_{2})= \nabla_{G}\mbox{\rm -}\deg(\nabla\varphi
_{1},\Omega_{1})\ast\nabla_{G}\mbox{\rm -}\deg(\nabla\varphi_{2},\Omega_{2}),
\]
where the multiplication `$\ast$' is taken in the Euler ring $U(G)$.




\item \textrm{\textbf{(Reduction Property)}} Let $V$ be an orthogonal
$G$-representation, $f:V\to V$ a $G$-gradient $\Omega$-admissible map, then
\begin{equation}
\label{eq:red-G}\pi_{0}\left[  \nabla_{G}\mbox{\rm -}\deg(f,\Omega)\right]
=G\mbox{\rm -}\deg(f,\Omega).
\end{equation}
where the ring homomorphism $\pi_{0}:U(G)\to A(G)$ is given by \eqref{eq:pi_0-homomorphism}.
\end{enumerate}
\end{theorem}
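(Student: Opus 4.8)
The plan is to construct $\nabla_{G}\text{-deg}$ by equivariant Morse theory, taking the \textbf{Normalization} formula as the definition on a dense class of maps and then extending by \textbf{Homotopy}; this parallels the construction of the Brouwer $G$-degree in Definition \ref{thm:GpropDeg}, the only new feature being that one must keep track of orbit types with positive-dimensional Weyl group so that the output lies in the Euler ring $U(G)$ rather than in $A(G)$. Since here $V$ is a finite-dimensional orthogonal $G$-representation, no infinite-dimensional reduction is needed. Throughout, for a critical orbit $G(v_0)$ of a $G$-invariant $C^2$ function I write $m^-(\nabla^2\varphi(v_0))$ for the total dimension of negative eigenspaces of the Hessian; note that for a special $\Omega$-Morse function the kernel of $\nabla^2\varphi(v_0)$ is exactly the tangent space to the orbit, so this number agrees with the Morse index of the restriction to a normal slice.

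\emph{Step 1 (definition on special Morse functions).} First I would invoke equivariant transversality (Bierstone–Wasserman type genericity; see \cite{balanov2006applied}) to show that $G$-invariant special $\Omega$-Morse functions are dense among $G$-invariant $C^2$ functions with $\nabla\varphi$ being $\Omega$-admissible: after an arbitrarily small $G$-invariant perturbation, $(\nabla\varphi)^{-1}(0)\cap\Omega$ is a finite union of isolated critical orbits $G(v_1),\dots,G(v_r)$ with nondegenerate slice Hessians. For such $\varphi$ I \emph{define}
\[
\nabla_{G}\text{-deg}(\nabla\varphi,\Omega):=\sum_{i=1}^{r}(-1)^{m^-(\nabla^2\varphi(v_i))}\,(G_{v_i})\;\in\;U(G),
\]
which is exactly what \textbf{Normalization} forces, and is manifestly \textbf{additive} over disjoint invariant neighborhoods of the orbits. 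For a general $G$-gradient $\Omega$-admissible pair $(\nabla\varphi,\Omega)$ one then sets $\nabla_{G}\text{-deg}(\nabla\varphi,\Omega):=\nabla_{G}\text{-deg}(\nabla\tilde\varphi,\Omega)$ for any special Morse $\tilde\varphi$ close to $\varphi$.

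\emph{Step 2 (well-definedness --- the main obstacle).} The crux is to show this element of $U(G)$ is independent of the special Morse representative within a fixed $\Omega$-admissible gradient homotopy class. Given two such $\varphi_0,\varphi_1$, connect them by a $G$-gradient $\Omega$-admissible homotopy $\nabla_v\Psi(t,\cdot)$, and perturb \emph{inside the class of gradient homotopies} so that the parametrized zero set $\{(t,v)\in[0,1]\times\Omega:\nabla_v\Psi(t,v)=0\}$ is a one-dimensional manifold of orbits whose projection to $[0,1]$ has only finitely many fold (birth–death) critical values, none occurring on $\partial\Omega$. At each fold a pair of critical orbits $G(w^+),G(w^-)$ of \emph{equal} isotropy $(K)$ is created or destroyed; because $\Psi(t,\cdot)$ is a genuine \emph{function}, the slice Morse indices at $w^+$ and $w^-$ differ by exactly one, so the contributions $(-1)^{m^-}(K)$ cancel in $U(G)$. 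This rigidity is the whole point of the gradient hypothesis: for general equivariant maps, orbits of \emph{unequal} isotropy may coalesce and such cancellation fails, which is why the Euler ring (not just the Burnside ring) is the natural receptacle. Hence the sum is invariant along the homotopy, and the definition is consistent; \textbf{Homotopy} is then built in. I expect this step --- controlling the bifurcation of critical orbits along one-parameter \emph{gradient} families, which requires equivariant Morse/Conley-index technology rather than the free use of linear homotopies --- to be the technically heaviest part.

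\emph{Step 3 (remaining properties and uniqueness).} \textbf{Existence} is immediate from the additive-over-orbits formula of Step 1 together with the approximation. For \textbf{Product}, approximate $\varphi_1,\varphi_2$ by special Morse functions; the critical orbits of $\varphi_1\oplus\varphi_2$ are built from the $G$-spaces $G/G_{v_i}\times G/G_{w_j}$, each carrying Morse index $m^-(\nabla^2\varphi_1(v_i))+m^-(\nabla^2\varphi_2(w_j))$, and the class of $G/H\times G/K$ in $U(G)$ is by construction $(H)\ast(K)$ with structure constants $\chi_c\bigl((G/H\times G/K)_L/N(L)\bigr)$ as in Definition \ref{def:EulerRing}; multiplying out reproduces exactly $\nabla_G\text{-deg}(\nabla\varphi_1,\Omega_1)\ast\nabla_G\text{-deg}(\nabla\varphi_2,\Omega_2)$. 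For the \textbf{Reduction Property}, apply $\pi_0$ from \eqref{eq:pi_0-homomorphism}: it annihilates every term with $\dim W(G_{v_i})>0$ and leaves $\sum_{\dim W(G_{v_i})=0}(-1)^{m^-}(G_{v_i})$, which is precisely the Morse-theoretic formula for the Brouwer $G$-degree of Definition \ref{thm:GpropDeg}; since both $\pi_0\circ\nabla_G\text{-deg}$ and $G\text{-deg}$ are additive and homotopy invariant and agree on special Morse functions, they coincide on all admissible gradient pairs. Finally, \textbf{uniqueness}: any $U(G)$-valued invariant satisfying Normalization, Additivity and Homotopy must agree with the Step 1 formula on special Morse functions, hence by the density of Step 2 it is determined everywhere.
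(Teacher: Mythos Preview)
The paper does not actually prove this theorem. Theorem~\ref{thm:Ggrad-properties} is stated in Appendix~\ref{app:D} as a known result, with the remark ``For other properties such as Functoriality, Hopf Property, Suspension, etc., one is referred to Section~6 of \cite{DK}''; the construction itself is attributed in the Introduction to G\k{e}ba \cite{gkeba1997degree}. So there is no ``paper's own proof'' to compare against---the result is imported from the literature.

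That said, your sketch is a reasonable outline of how the construction goes, and your identification of Step~2 as the crux is exactly right. One comment on the literature approach versus yours: G\k{e}ba's original construction in \cite{gkeba1997degree} does not track birth--death bifurcations directly but instead passes through the equivariant Conley index, defining the degree via the $G$-homotopy type of a Conley index pair and reading off Euler-ring coefficients from the stratified Euler characteristics. This sidesteps the need to put one-parameter gradient families into generic position stratum-by-stratum, which in your approach is where the genuine equivariant work hides (the ``fold'' picture is clean on a single stratum, but controlling what happens as critical orbits approach a lower stratum---i.e., when the isotropy jumps---is delicate, and your parenthetical ``orbits of unequal isotropy may coalesce'' is precisely the phenomenon one has to rule out or account for in the gradient case). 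Your direct Morse-theoretic route is in principle workable and perhaps more transparent for the Product and Reduction properties, but the Conley-index route is what makes Step~2 clean in the standard references.
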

For other properties such as Functoriality, Hopf Property, Suspension, etc., one is referred to Section 6 of \cite{DK}.

\vs 

\subsection{Computations of the Gradient $G$-Equivariant Degree}

Similarly to the case of the Brouwer degree, the gradient equivariant degree
can be computed using standard linearization techniques. Therefore, it is
important to establish computational formulae for linear gradient operators.
\vskip.3cm Let $V$ be an orthogonal (finite-dimensional) $G$-representation
and suppose that $A:V\to V$ is a $G$-equivariant symmetric isomorphism of $V$,
i.e., $A:=\nabla\varphi$, where $\varphi(x)=\frac12 Ax\bullet x$. Consider the
$G$-isotypical decomposition of $V$
\[
V=\bigoplus_{i}V_{i},\quad V_{i}\;\;\mbox{modeled on}\;\mathcal{V}_{i}.
\]
We assume here that $\{\mathcal{V}_{i}\}_{i}$ is the complete list of
irreducible $G$-representations. \vskip.3cm Let $\sigma(A)$ denote the
spectrum of $A$ and $\sigma_{-}(A):=\{\lambda\in\sigma(A): \lambda<0\}$, and
let $E_{\mu}(A)$ stands for the eigenspace of $A$ corresponding to $\mu
\in\sigma(A)$. Put $\Omega:=\{x\in V: \|x\|<1\}$. Then, $A$ is $\Omega
$-admissibly homotopic (in the class of gradient maps) to a linear operator
$A_{o}:V\to V$ such that
\[
A_{o}(v):=
\begin{cases}
-v, & \mbox{if}\;v\in E_{\mu}(A),\;\mu\in\sigma_{-}(A),\\
v, & \mbox{if}\;v\in E_{\mu}(A),\;\mu\in\sigma(A)\setminus\sigma_{-}(A).
\end{cases}
\]
In other words, $A_{o}|_{E_{\mu}(A)}=-\text{\textrm{Id}}$ for $\mu\in
\sigma_{-}(A)$ and $A_{o}|_{E_{\mu}(A)}=\text{\textrm{Id}}$ for $\mu\in
\sigma(A)\setminus\sigma_{-}(A)$. Suppose that $\mu\in\sigma_{-}(A)$. Then,
denote by $m_{i}(\mu)$ the integer
\[
m_{i}(\mu):=\dim(E_{\mu}(A)\cap V_{i})/\dim\mathcal{V}_{i},
\]
which is called the \textit{$\mathcal{V}_{i}$-multiplicity} of $\mu$. Since
$\nabla_{G}\mbox{\rm -}\deg(\text{\textrm{Id}},\mathcal{V}_{i})=(G)$ is the
unit element in $U(G)$, we immediately obtain, by product property ($\nabla
$5), the following formula
\begin{equation}
\label{eq:grad-lin}\nabla_{G}\mbox{\rm -}\deg(A,\Omega)= \prod_{\mu\in
\sigma_{-}(A)}\prod_{i} \left[  \nabla_{G}\mbox{\rm -}\deg(-\text{\textrm{Id}
},B(\mathcal{V}_{i}))\right]  ^{m_{i}(\mu)},
\end{equation}
where $B(W)$ is the unit ball in $W$. \vskip.3cm

\begin{definition} \label{de:basicequi}
\textrm{\ Assume that $\mathcal{V}_{i}$ is an irreducible $G$-representation.
Then, the $G$-equivariant gradient degree:
\[
\nabla_{G}\text{-}\deg_{\mathcal{V}_{i}}:=\nabla_{G}\mbox{\rm -}\deg(-\text{\text{\rm Id}}
,B(\mathcal{V}_{i}))\in U(G)
\]
is called the \textit{gradient $G$-equivariant basic degree} for
$\mathcal{V}_{i}$. }
\end{definition}

\vskip.3cm

\begin{proposition}
\label{prop:invertible} The gradient $G$- equivariant basic degrees $\nabla_{G}\text{-}\deg_{\mathcal{V}_{i}}$ are invertible elements in $U(G)$.
\end{proposition}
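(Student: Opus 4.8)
The plan is to transfer the statement to the Burnside ring through the ring homomorphism $\pi_{0}:U(G)\to A(G)$ of \eqref{eq:pi_0-homomorphism} and to exploit the fact that $\ker\pi_{0}$ consists of nilpotent elements of $U(G)$. Write $a:=\nabla_{G}\text{-}\deg_{\mathcal{V}_{i}}=\nabla_{G}\text{-}\deg(-\id,B(\mathcal{V}_{i}))\in U(G)$.

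First I would determine $\pi_{0}(a)$. Applying the Reduction Property \eqref{eq:red-G} to the gradient map $f=-\id$ on $\Omega=B(\mathcal{V}_{i})$ and recalling from \eqref{eq:basicdeg} that $\deg_{\mathcal{V}_{i}}=G\text{-}\deg(-\id,B(\mathcal{V}_{i}))$, one obtains $\pi_{0}(a)=\deg_{\mathcal{V}_{i}}$. By Lemma~\ref{le:involutive} the element $\pi_{0}(a)$ is an involution in $A(G)$, i.e.\ $\pi_{0}(a)\cdot\pi_{0}(a)=(G)$. Since $\pi_{0}$ is a ring homomorphism sending the unit $(G)$ of $U(G)$ to the unit $(G)$ of $A(G)$, this gives $\pi_{0}\big(a\ast a\big)=(G)$, hence $\eta:=a\ast a-(G)$ lies in $\ker\pi_{0}$. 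By \eqref{eq:pi_0-homomorphism}, $\ker\pi_{0}$ is precisely the $\mathbb{Z}$-submodule of $U(G)$ spanned by the generators $(L)\in\Phi(G)$ with $\dim W(L)>0$, so the Corollary following Proposition~\ref{pro:nilp-elements} applies and $\eta$ is nilpotent, say $\eta^{N}=0$.

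Finally, $U(G)$ is commutative (the Euler multiplication \eqref{eq:Euler-mult} is symmetric under interchanging the factors $G/H$ and $G/K$) with unit $(G)$, so $a\ast a=(G)+\eta$ is invertible with inverse $\sum_{k=0}^{N-1}(-1)^{k}\eta^{k}$. Setting $b:=a\ast\big(\sum_{k=0}^{N-1}(-1)^{k}\eta^{k}\big)$ we get $a\ast b=(G)$, which shows that $a=\nabla_{G}\text{-}\deg_{\mathcal{V}_{i}}$ is a unit of $U(G)$.

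The only step that is not pure ring bookkeeping is the nilpotency of $\ker\pi_{0}$, i.e.\ the Corollary to Proposition~\ref{pro:nilp-elements}; I expect this to be the conceptual crux. Note that Lemma~\ref{le:involutive} can be weakened: it is enough that $\deg_{\mathcal{V}_{i}}$ be a unit in $A(G)$, since then one may lift one of its $A(G)$-inverses into $A(G)\subset U(G)$ and run the same nilpotency argument to see that $a$ times this lift differs from $(G)$ by a nilpotent. A direct homotopy proof (trying to join $-\id$ to $\id$ on $\mathcal{V}_{i}\oplus\mathcal{V}_{i}$ by $G$-gradient admissible maps) is not available, because the rotation realizing such a homotopy is not a gradient map; this is exactly why the passage through $\pi_{0}$ is needed.
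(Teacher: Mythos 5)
Your proof is correct and follows essentially the same route as the paper: reduce modulo $\pi_{0}$ to the Burnside ring, use Lemma \ref{le:involutive} to see that $\pi_{0}(\nabla_{G}\text{-}\deg_{\mathcal V_i})$ squares to the unit, conclude that $(\nabla_{G}\text{-}\deg_{\mathcal V_i})^{2}$ differs from $(G)$ by an element of $\ker\pi_{0}$, and then invert via the geometric series using nilpotency. (Your $\eta$ is the paper's $-\alpha$; this is only a sign convention.) One small point in your favor: you correctly invoke the Corollary to Proposition \ref{pro:nilp-elements} for nilpotency of a general linear combination of classes with positive-dimensional Weyl groups, whereas the paper cites Proposition \ref{pro:nilp-elements} itself, which strictly speaking only covers a single generator $(H)$.
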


\begin{proof}
Let $a:=\pi_{0}(\nabla_G\text{\textrm{-deg\,}}_{\cV_{i}})$, then $a^{2}=(G)$ in $A(G)$ (see Lemma \eqref{le:involutive}),
which implies that $(\nabla_{G}\text{\textrm{-deg\,}}_{\mathcal{V}_{i}})^{2}=(G)-\alpha$,
where for every $(H)\in\Phi_{0}(G)$ one has coeff$^{H}(\alpha)=0$. It is
sufficient to show that $(G)-\alpha$ is invertible in $U(G)$. Since (by
Proposition \ref{pro:nilp-elements}) for sufficiently large $n\in{\mathbb{N}}%
$, $\alpha^{n}=0$, one has
\[
\big((G)-\alpha)\sum_{n=0}^{\infty}\alpha^{n}=\sum_{n=0}^{\infty}\alpha
^{n}-\sum_{n=1}^{\infty}\alpha^{n}=(G),
\]
where $\alpha^{0}=(G)$
\end{proof}

\vs
\vskip.3cm
\black
\vskip.3cm \noindent\textbf{Degree on the Slice:} Let $\mathscr H$ be a
Hilbert $G$-representation. Suppose that the orbit $G(u_{o})$ of $u_{o}
\in\mathscr H$ is contained in a finite-dimensional $G$-invariant subspace, so
the $G$-action on that subspace is smooth and $G(u_{o})$ is a smooth
submanifold of $\mathscr H$. In such a case we call the orbit $G(u_{o})$
\textit{finite-dimensional}. Denote by $S_{o}\subset\mathscr H$ the slice to
the orbit $G(u_{o})$ at $u_{o}$. Denote by $V_{o}:=\tau_{u_{o}}G(u_{o})$ the
tangent space to $G(u_{o})$ at $u_{o}$. Then $S_{o}=V_{o}^{\perp}$ and $S_{o}
$ is a smooth Hilbert $G_{u_{o}}$-representation.

Then we have (cf. \cite{BeKr}):

\begin{theorem}
\textrm{(Slice Principle)} \label{thm:SCP} Let $\mathscr{H}$ be a Hilbert
$G$-representation, $\Omega$ an open $G$-invariant subset in $\mathscr H$, and
$\varphi:\Omega\rightarrow\mathbb{R}$ a continuously differentiable
$G$-invariant functional such that $\nabla\varphi$ is a completely continuous
field. Suppose that $u_{o}\in\Omega$ and $G(u_{o})$ is an finite-dimensional
isolated critical orbit of $\varphi$ with $S_{o}$ being the slice to the orbit
$G(u_{o})$ at $u_{o}$, and $\mathcal{U}$ an isolated tubular neighborhood of
$G(u_{o})$. Put $\varphi_{o}:S_{o}\rightarrow\mathbb{R}$ by $\varphi
_{o}(v):=\varphi(u_{o}+v)$, $v\in S_{o}$. Then
\begin{equation}
\nabla_{G}\text{\text{\rm-deg\,}}(\nabla\varphi,\mathcal{U})=\Theta
(\nabla_{G_{u_{o}}}\text{\text{\rm-deg\,}}(\nabla\varphi_{o},\mathcal{U}\cap
S_{o})), \label{eq:SDP}%
\end{equation}
where $\Theta:U(G_{u_{o}})\rightarrow U(G)$ is homomorphism defined on
generators $\Theta(H)=(H)$, $(H)\in\Phi(G_{u_{o}})$.
\end{theorem}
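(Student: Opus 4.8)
The plan is to reduce \eqref{eq:SDP} to a finite-dimensional identity on the associated bundle $G\times_{H}B$, where $H:=G_{u_o}$ and $B$ is a small $H$-invariant ball in the slice $S_o=V_o^{\perp}$, and then to evaluate both sides using only the additivity and normalization axioms of Theorem~\ref{thm:Ggrad-properties}. First I would pass to finite dimensions. Since $\nabla\varphi$ is a completely continuous field, $\nabla_{G}\text{-deg}(\nabla\varphi,\mathcal U)$ is by construction the stabilized value of the finite-dimensional degrees $\nabla_{G}\text{-deg}(\nabla\varphi_{n},\mathcal U\cap\mathscr H^{n})$ along a $G$-equivariant approximation scheme $\{P_{n}\}$ (cf.\ \cite{balanov2025degree,DK}); as $G(u_o)$ is finite-dimensional and $G$-invariant, one may take $\{P_{n}\}$ with $G(u_o)\subset\mathscr H^{n}$, so that $S_o^{n}:=S_o\cap\mathscr H^{n}$ is the slice for the truncated functional and $\nabla_{H}\text{-deg}(\nabla\varphi_o,\mathcal U\cap S_o)$ is the stabilized value of $\nabla_{H}\text{-deg}(\nabla(\varphi_o)_{n},\mathcal U\cap S_o^{n})$. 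Thus it suffices to prove \eqref{eq:SDP} with $\mathscr H=V$ a finite-dimensional orthogonal $G$-representation.

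In that case $V$ is flat, so $\exp_{u_o}(v)=u_o+v$ and the affine slice $u_o+S_o$ yields, by the equivariant tubular neighborhood theorem and after shrinking $B$ if needed, a $G$-equivariant diffeomorphism
\[
\Phi:G\times_{H}B\longrightarrow\mathcal U,\qquad \Phi([g,v])=g(u_o+v),
\]
well defined because $hu_o=u_o$ for $h\in H$. Pulling the metric of $V$ back to $G\times_{H}B$ makes $\Phi$ an isometry, hence $\nabla(\varphi\circ\Phi)=\Phi^{*}\nabla\varphi$, and by functoriality of the gradient degree (cf.\ \cite{DK}) $\nabla_{G}\text{-deg}(\nabla\varphi,\mathcal U)=\nabla_{G}\text{-deg}(\nabla(\varphi\circ\Phi),G\times_{H}B)$. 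The key observation is that $G$-invariance of $\varphi$ forces $\varphi\circ\Phi([g,v])=\varphi(u_o+v)=\varphi_o(v)$; that is, $\varphi\circ\Phi$ is the ``product'' function $\Psi^{\varphi_o}([g,v]):=\varphi_o(v)$ determined by the $H$-invariant function $\varphi_o|_{B}$.

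Next I would observe that for any $H$-invariant $F\in C^{2}(B,\mathbb R)$ the zero set of $\nabla\Psi^{F}$ on $G\times_{H}B$ equals $\{[g,v]:\nabla F(v)=0\}$ (indeed $\Psi^{F}$ is $G$-invariant, hence constant along orbits, and depends only on the fibre coordinate), so an $H$-gradient-admissible homotopy from $\nabla\varphi_o$ to a special $H$-Morse field $\nabla\psi_o$ on $B$ lifts, via $\Psi^{(1-t)\varphi_o+t\psi_o}$, to a $G$-gradient-admissible homotopy; by homotopy invariance, $\nabla_{G}\text{-deg}(\nabla\Psi^{\varphi_o},G\times_{H}B)=\nabla_{G}\text{-deg}(\nabla\Psi^{\psi_o},G\times_{H}B)$. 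Writing $(\nabla\psi_o)^{-1}(0)\cap B=\bigsqcup_{i=1}^{k}H(v_i)$ with $H_{v_i}=:K_i$, additivity reduces the claim to a single index $i$: in a local trivialization of $G\times_{H}B\to G/H$ near $eH$ one has $\Psi^{\psi_o}(x,w)=\psi_o(w)$, so at $[e,v_i]$ the Hessian of $\Psi^{\psi_o}$ is block-diagonal, zero on the base factor and $\nabla^{2}\psi_o(v_i)$ on the fibre factor; hence $\Psi^{\psi_o}$ is a special $G$-Morse function near the isolated critical orbit $G\cdot[e,v_i]$, its isotropy is $K_i$, and $m^{-}(\nabla^{2}\Psi^{\psi_o}([e,v_i]))=m^{-}(\nabla^{2}\psi_o(v_i))$. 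By normalization and additivity on both sides,
\[
\nabla_{G}\text{-deg}(\nabla\Psi^{\psi_o},G\times_{H}B)=\sum_{i=1}^{k}(-1)^{m^{-}(\nabla^{2}\psi_o(v_i))}(K_i)=\Theta\big(\nabla_{H}\text{-deg}(\nabla\psi_o,B)\big),
\]
since $\Theta(K)=(K)$ on generators. Combining this with $\nabla_{H}\text{-deg}(\nabla\psi_o,B)=\nabla_{H}\text{-deg}(\nabla\varphi_o,\mathcal U\cap S_o)$ and with the identities of the preceding paragraph proves \eqref{eq:SDP} in finite dimensions, and the reduction above then gives the general case. (Well-definedness of $\Theta$ as an additive map is immediate because conjugate subgroups of $H$ remain conjugate in $G$; that it is moreover a homomorphism of Euler rings I would cite from \cite{BeKr}.)

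The hard part will be the transfer from $H$-equivariant data on the slice to $G$-equivariant data on $G\times_{H}B$: arranging $\Phi$ affine and confirming genuine invariance of the gradient degree under it, verifying that lifting an $H$-gradient-admissible homotopy on $B$ to $\Psi^{(\cdot)}$ keeps the field nonzero on $\partial(G\times_{H}B)$, and matching the Morse index of the product model along each critical $G$-orbit with the slice index --- all of which rely on the bundle structure $G\times_{H}B$ together with the fact that the tangent directions to a critical orbit lie in the kernel of the Hessian. Once these are in place, the remainder is a routine application of additivity and normalization.
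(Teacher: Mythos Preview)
The paper does not supply its own proof of Theorem~\ref{thm:SCP}; it is stated there as a cited result (``Then we have (cf.~\cite{BeKr}):'') and used as a black box in the proof of Theorem~\ref{thm:main}. So there is no paper argument to compare against.

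That said, your outline is the standard route and is essentially correct. A couple of places deserve a little more care when you write it out. First, in your Morse-index step you say the Hessian of $\Psi^{\psi_o}$ is ``zero on the base factor''; that is true in the trivialization, but note that the $G$-orbit through $[e,v_i]$ is $G/K_i$, not $G/H$, so its tangent space is the base direction \emph{plus} $T_{v_i}H(v_i)$ inside the fibre. The normalization axiom needs nondegeneracy of the Hessian on the normal to the $G$-orbit, which here is exactly the $K_i$-slice to $H(v_i)$ inside $B$; nondegeneracy there is inherited from $\psi_o$ being special $H$-Morse, and the Morse index agrees with $m^{-}(\nabla^{2}\psi_o(v_i))$ because the extra zero block contributes nothing. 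Second, the ``functoriality under the diffeomorphism $\Phi$'' you invoke is really just invariance of the gradient degree under $G$-equivariant changes of coordinates (pull the metric back so $\Phi$ is an isometry, then gradients correspond); make sure to phrase it that way rather than citing a named functoriality property. With those two points tightened, your finite-dimensional argument plus the approximation-scheme reduction gives a complete proof.
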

\section{Block matrix and Hessian $\nabla^2 U(v^o)$} \label{app:pre}
\subsection{Block matrix $P_{jk}$.} From equation \eqref{eqn: Pjk}, one has

\[
\begin{aligned}
&P_{11} = P_{33} =2a(\mathfrak m_{12}+\mathfrak m_{14}+\mathfrak m_{15}+\mathfrak m_{16})+2b\mathfrak m_{13}+2c\mathfrak m_{10}-d\id=
\begin{bmatrix}
    8a+8b+2c-d & 0 & 0 \\ 
    0 & 4a-d & 0 \\ 
    0 & 0 & 4a-d
\end{bmatrix}, \\
& P_{22} = P_{44} =2a(\mathfrak m_{12}+\mathfrak m_{23}+\mathfrak m_{25}+\mathfrak m_{26})+2b\mathfrak m_{24}+2c\mathfrak m_{20}-d\id=
\begin{bmatrix}
    4a-d & 0 & 0 \\ 
    0 & 8a+8b+2c-d & 0 \\ 
    0 & 0 & 4a-d
\end{bmatrix}, \\ 
& P_{55} = P_{66} =2a(\mathfrak m_{15}+\mathfrak m_{25}+\mathfrak m_{35}+\mathfrak m_{45})+2b\mathfrak m_{56}+2c\mathfrak m_{50}-d\id=
\begin{bmatrix}
    4a-d & 0 & 0 \\ 
    0 & 4a-d & 0 \\ 
    0 & 0 & 8a+8b+2c-d
\end{bmatrix}, \\
& P_{12}=P_{34}=-2a\mathfrak m_{12}-e\id=
\begin{bmatrix}
    -2a-e&2a&0\\
    2a&-2a-e&0\\
    0&0&-e
\end{bmatrix},  P_{14}=P_{23}=-2a\mathfrak m_{14}-e\id=
\begin{bmatrix}
    -2a-e&-2a&0\\
    -2a&-2a-e&0\\
    0&0&-e
\end{bmatrix}, \\ 
&P_{15}=P_{36}=-2a\mathfrak m_{15}-e\id=
\begin{bmatrix}
    -2a-e&0&2a\\
    0&-e&0\\
    2a&0&-2a-e
\end{bmatrix}, P_{16}=P_{35}=-2a\mathfrak m_{16}-e\id=
\begin{bmatrix}
    -2a-e&0&-2a\\
    0&-e&0\\
    -2a&0&-2a-e
\end{bmatrix},\\
\end{aligned}
\]
\[
\begin{aligned}
& P_{25}=P_{46}=-2a\mathfrak m_{25}-e\id=
\begin{bmatrix}
-e&0&0\\
    0&-2a-e&2a\\
    0&2a&-2a-e\\
\end{bmatrix},P_{26}=P_{45}=-2a\mathfrak m_{26}-e\id=
\begin{bmatrix}
-e&0&0\\
    0&-2a-e&-2a\\
    0&-2a&-2a-e\\
\end{bmatrix},\\
& P_{13}=-2b\mathfrak m_{13}-d\id=
\begin{bmatrix}
    -8b-d&0&0\\
    0&-d&0\\
    0&0&-d
\end{bmatrix}, 
P_{24}=-2b\mathfrak m_{24}-d\id=
\begin{bmatrix}
    -d&0&0\\
    0&-8b-d&0\\
    0&0&-d
\end{bmatrix}, \\
\end{aligned}
\]
\[
\begin{aligned}
& P_{56}=-2b\mathfrak m_{56}-d\id=
\begin{bmatrix}
    -d&0&0\\
    0&-d&0\\
    0&0&-8b-d
\end{bmatrix},\\ 
\end{aligned}
\]\vs
\subsection{Eigenspace corresponding to Hessian $\nabla^2 U(v^o)|_{v^o=1.41278p^o}$.}
By direct computation, one has the following detailed eigensystem of $\nabla^2 U(v^o)_{v^o=1.41278p^o}$ corresponding to Table \eqref{tbl:eigen}:

\begin{table}[htbp]
\centering
\rotatebox{-90}{
\adjustbox{ width=1.5\textwidth}{
\begin{tabular}{|l|c|l|}
\hline
Eigenvalue $\alpha^2_j$ at $v^o$ & Irreducible rep $\mathcal W_j$& Eigenvectors \\ \hline
 $\alpha^2_0=2(8a+8b+c)=0.7867$ & $\mathcal W_0$& \makecell[l]{$v_0^1=0.408(1, 0, 0,| 0, 1, 
0,| -1, 0, 0,|
0, -1, 0,| 0, 
0, 1,|0, 0, -1)^T$}\\ \hline
$\alpha^2_4=2(2a+8b+c)=0.5123$&$\mathcal W_4$ & \makecell[l]{$v_4^1=(0.2887, 0, 0,| 0, 0.2887,
0,| -0.2887, 0, 0,| 
0, -0.2887, 0,| 0, 
0, -0.577,| 0, 0, 0.577)^T$\\
$v_4^2=(0.5, 0, 0,| 0, -0.5,
0,| -0.5, 0, 0,| 0,
0.5, 0,| 0, 0, 0,|
0, 0, 0)^T$} \\ \hline
$\alpha^2_7=6a+c-2d-e-\rho=0.2532$&$\mathcal W_7$& \makecell[l]{$v_7^1=(0.0182, -0.0376, -0.485,| -0.0919, 0.0074, -0.485,|
0.0182, -0.0376, -0.485,| -0.0919, 0.0074, -0.485,| 
-0.0919, -0.0376, 0.0959,| -0.0919, -0.0376, 0.0959)^T$\\
$v_7^2=(0.006791, -0.492, 0.0446,| -0.0343, 0.0973, 0.0446,|
0.006791, -0.492, 0.0446,| -0.0343, 0.0973, 0.0446,|
-0.0343, -0.492, -0.0088,| -0.0343, -0.492, -0.0088)^T$\\
$v_7^3=(0.096, 0.0419, 0.0888,| -0.485, -0.0083, 0.0888,|
0.096, 0.0419, 0.0888,| -0.485, -0.0083, 0.0888,|
-0.485, 0.0419, -0.0176,| -0.485, 0.0419, -0.0176)^T$} \\ \hline
$\alpha^2_{7^*}= 6a+c-2d-e+\rho=0.5882$&$\mathcal W_7^*$ &\makecell[l]{$v_{7^*}^1=(0.00739, 0.00042, 0.0693,| 0.000731, 0.00427,
0.0693,| 0.00739, 0.00042, 0.0693,| 0.000731, 
0.00427, 0.0693,| 0.000731, 0.00042, 0.7002,| 
0.000731, 0.00042, 0.7002)^T$\\
$v_{7^*}^2=(-0.599, -0.0358, 0.00084,| -0.0593, -0.362, 
0.00084,| -0.599, -0.0358, 0.00084,| -0.0593, 
-0.362, 0.00084,| -0.0593, -0.0358, 0.0085,|
-0.0593, -0.0358, 0.00854)^T$\\
$v_{7^*}^3=(0.362, -0.0593, -0.000016,| 0.0358, -0.599, 
-0.000016,| 0.362, -0.0593, -0.000016,| 0.0358, 
-0.599, -0.000016,| 0.0358, -0.0593, -0.000167,| 
0.0358, -0.0593, -0.000167)$} \\ \hline
$\alpha^2_8= 8a=0.1829$ & $\mathcal W_8$ &\makecell[l]{
$v_8^1=(0, 0.4068, 0.166,| 0.4068, 0, 
-0.239,| 0, -0.4068, -0.166,| -0.4068, 
0, 0.239,| 0.166, -0.239, 0,| 
-0.166, 0.239,0)^T$\\
$v_8^2=(0, 0.291, -0.222,| 0.291, 0, 
0.341,| 0, -0.291, 0.222,| -0.291, 
0, -0.341,| -0.222, 0.341, 0,| 
0.222, -0.341, 0)^T$\\
$v_8^3=(0, -0.00699, 0.416,| -0.00699, 0, 0.2772,| 0, 0.00699, -0.416,| 0.00699, 
0, -0.2772,| 0.416, 0.2772, 0,| 
-0.416, -0.2772, 0)^T$
} \\ \hline
$\alpha^2_9= 2(2a-d+e)=0.01173$ &$\mathcal W_9$& \makecell[l]{
$v_9^1=(0, -0.1726, 0.266,| -0.3865, 0, 
-0.266,| 0, -0.1726, 0.266,| -0.3865, 
0, -0.266,| 0.3865, 0.1726, 0,| 
0.3865, 0.1726, 0)^T$\\
$v_9^2=(0, 0.06195, 0.4212,| 0.2622, 0, \
-0.4212,| 0, 0.06195, 0.4212,| 0.2622, \
0, -0.4212,| -0.2622, -0.06195, 0,| \
-0.2622, -0.06195, 0)^T$\\
$v_9^3=(0, -0.465, -0.0426,| 0.1784, 0, \
0.0426,| 0, -0.465, -0.0426,| 0.1784, \
0, 0.0426,| -0.1784, 0.465, 0,| 
-0.1784, 0.465, 0)^T$
} \\ \hline
\end{tabular}}
}
\caption{Eigensystem of Hessian $\nabla^2U(v^o)$}
\label{tbl:eigensystem}
\end{table}
\newpage

\end{document}